\newtheorem{proof}{Proof}
\newtheorem{theorem}{Theorem}[section]
\newtheorem{prop}{Proposition}[section]
\newtheorem{cor}{Corollary}
\newtheorem{algorithm}{\textbf{Algorithm}}
\numberwithin{figure}{section}
\numberwithin{table}{section}
\numberwithin{equation}{section}
\begin{document}
\title{Wiener chaos expansion for stochastic Maxwell equations driven by Wiener process}
 \author{
        Lihai Ji\footnotemark[2] \footnotemark[3], ~Kuan Xue\footnotemark[4] ~and Liying Zhang$^{\ast}$\footnotemark[4]\\
         {\small \footnotemark[2] Institute of Applied Physics and Computational Mathematics, Beijing 100094, China.} \\
         {\small \footnotemark[3] Shanghai Zhangjiang Institute of Mathematics, Shanghai 201203, China.}\\ 
          {\small \footnotemark[4] School of Mathematical Science, China University of Mining and Technology (Beijing),}\\
         {\small Beijing 100083, China.}
         }
       \maketitle
    \footnotetext{$^{\ast}$Corresponding author }

\begin{abstract}
 A novel and efficient algorithm based on the Wiener chaos expansion is proposed for the stochastic Maxwell equations driven by Wiener process. The proposed algorithm can reduce the original stochastic system to the deterministic case and separate the randomness in the computation. Therefore, it can yield a significant improvement of efficiency and lead to less computational errors compared to the Monte Carlo method, since the statistics of the solution can be solved directly without repeating over many realizations. In particular, the proposed algorithm could inherit the multi-symplecticity. Numerical experiments are dedicated to performing the efficiency and accuracy of the Wiener chaos expansion algorithm.\\

\textbf{AMS subject classification: } {60H35, 60H10, 65C30.}\\

\textbf{Key Words: }{\rm\small}Stochastic Maxwell equations; Wiener chaos expansion; Multi-symplecticity; Monte Carlo method
\end{abstract}
	
\section{Introduction}
\label{Introduction}
Stochastic Maxwell equations provide the foundation in stochastic electromagnetism and statistical radiophysics (see e.g., \cite{RefRKT1989}). It is of special importance to develop efficient numerical methods for simulating stochastic Maxwell equations in large scale and long time computations. In the last decade, various numerical methods for stochastic Maxwell equations have been developed, analyzed, and tested in the literature. For instance, see \cite{RefKai,RefChen2021,RefSSX2023} for the finite element methods, \cite{RefHJZC2017} for the wavelet collocation method, \cite{RefCHJ2019a,RefCHJ2019b,RefCCHS2020} for the temporally semi-discrete schemes, \cite{RefCHJL2025} for the ergodic approximation, \cite{Refhai,RefSSX2022,RefCHZ2016} for the stochastic multi-symplectic methods. For a detail description of the numerical methods for stochastic Maxwell equations as well as its implementation and applications, we refer the readers to the review paper \cite{RefZCHJ2019}, and the lecture notes \cite{RefCHJ2023}.

As we all know, the computing cost is huge for the multi-dimensional stochastic Maxwell equations due to the high dimension in space and the longtime computation requirements. To solve this issue, a few numerical methods have been developed. For example, \cite{RefHJZC2017} constructed an efficient and energy-conserving numerical approximation for the three-dimensional stochastic Maxwell equations with multiplicative noise by utilizing the wavelet collocation method in space. \cite{RefCHJ2023} introduced the operator splitting technique to simulate the stochastic Maxwell equations with additive noise in order to reduce the computational cost. However, to accurately estimate the statistical properties of the stochastic Maxwell equations, a substantial number of realizations have to be computed, which imposed more computational cost. This motivates us to design a more promising and a highly efficient algorithm to reduce the computational cost and improve efficiency.

Recently, there has been growing interest in the study of the Wiener chaos expansion (WCE) algorithm to the stochastic partial differential equations (PDEs), see for example stochastic Burgers equation \cite{RefHLRZ2006}, stochastic Navier--Stokes equation \cite{RefMR2004}, stochastic advection-diffusion-reaction equations \cite{RefZRTK2012}. In this algorithm, the deterministic effects can be separated from the randomness in the stochastic PDEs. Consequently, it leads to a system of deterministic PDEs for the expansion coefficients, which is referred to as its propagator. The main statistics (such as mean, covariance, and higher-order statistical moments) can be calculated by simple formulas involving only the coefficients of the propagator without any use of the Monte Carlo (MC) method. Recently, we note that \cite{RefBAZC2009} developed the WCE algorithm for the two-dimensional stochastic Helmholtz equation excited by a spatially incoherent source. However, as far as we know, there are no known results about the numerical approximation of time-dependent stochastic Maxwell equations based on the WCE algorithm, even for the Wiener process case. We emphasize that our main aim is to present the WCE algorithm for the time-dependent three-dimensional stochastic Maxwell equations driven by Wiener process.

The rest of the paper is organized as follows. In section \ref{Preliminary} we introduce the stochastic Maxwell equations considered in the paper and revisit the WCE definition employed in \cite{RefHLRZ2006}. In section \ref{sec:WCE} we apply the WCE algorithm to the stochastic Maxwell equations and present some theoretical results. In section 4 we compare the WCE-based algorithm with algorithms exploiting the Monte Carlo method and illustrate the accuracy and capability of the proposed algorithm. Concluding remarks are given in section 5.
\section{Preliminaries}
\label{Preliminary}
This section presents the notations and basic results for stochastic Maxwell equations, including the regularity of the solution, the stochastic multi-symplectic structure, the energy conservation property. Simultaniously, a brief description of the WCE definition will be given.
\subsection{Stochastic Maxwell equations}
\label{sub_stochastic MEs}
We first collect notations used throughout this paper. Fix $T>0$ and let $(\Omega,\mathcal{F},\{\mathcal{F}_t\}_{0\leq t\leq T},\mathbb{P})$ be a complete probability space.  Moreover, $W_1,W_2$ are two one-dimensional independent standard Wiener processes. Let $\Theta\subset\mathbb{R}^3$ be an open, bounded, and Lipschitz domain with boundary $\partial \Theta$, and $\bf{n}$ be the unit outward normal vector on $\partial\Theta$. Denote by ${\bf{E}}=(E_1,E_2,E_3)^\top$ and ${\bf{H}}=(H_1,H_2,H_3)^\top$ the electric field and magnetic filed, respectively. Let ${\bf{e}}=(1,1,1)^\top$, ${\bf x}=(x,y,z)$ and  denote by $|\cdot|$ the Euclidean norm, by $\langle~\cdot~\rangle$ the Euclidean inner product, and by $\mathcal{E}$ the expectation. 
	
We work with the real Hilbert space $\mathbb{H}=L^2(\Theta)^3\times L^2(\Theta)^3$, endowed with the inner product
	\[
\left\langle \begin{pmatrix}
	{\bf E}_1\\{\bf H}_1
\end{pmatrix},~ \begin{pmatrix}
	{\bf E}_2\\{\bf H}_2
\end{pmatrix}\right\rangle_{\mathbb H}=\int_{\Theta}\big({\bf E}_1\cdot {\bf E}_2
+{\bf H}_1\cdot{\bf H}_2\big){\rm d}{\bf x}
\]
for all ${\bf E}_1, {\bf H}_1,{\bf E}_2,{\bf H}_2\in L^2(\Theta)^3$, and the norm
\[
\left\|\begin{pmatrix}
	{\bf E}\\{\bf H}
\end{pmatrix}\right\|_{\mathbb H}=\left[\int_{\Theta}\left(|{\bf E}|^2+|{\bf H}|^2\right){\rm d}{\bf x}\right]^{1/2},\quad \forall~{\bf E}, {\bf H}\in  L^2(\Theta)^3.
\]

Consider the following three-dimensional stochastic Maxwell equations driven by additive noise in the It\^o sense
\begin{equation}\label{2.1}	
\left \{
\begin{split}
		&{\rm d}{\bf E}-\nabla\times{\bf H} {\rm d}t=\sigma_{1}{\bf{e}}{\rm d}W_1(t),\quad (t,{\bf x})\in(0,T]\times\Theta,\\[1.5mm]
		&{\rm d}{\bf H}+\nabla\times{\bf E} {\rm d}t=\sigma_{2}{\bf{e}}{\rm d}W_2(t),\quad (t,{\bf x})\in(0,T]\times\Theta,\\[1.5mm]
		&{\bf E}(0,{\bf x})={\bf E}_0({\bf x}),\quad{\bf H}(0,{\bf x})={\bf H}_0({\bf x}),\quad {\bf x}\in\Theta,\\[1.5mm]
		&{\bf n}\times{\bf E}={\bf 0},\quad (t,{\bf x})\in(0,T]\times\partial\Theta,
\end{split}\right.
\end{equation}
where the positive constants $\sigma_{1},\sigma_{2}$ measure the size of the noises. 

Let $Z=({\bf H}^\top,{\bf E}^\top)^\top$ and $Z_0=({\bf H}_0^\top,{\bf E}_0^\top)^\top$. In a similar way to the proof of \cite[Theorem 2.4]{RefCHJ2023}, we can establish the $\mathbb{H}$-regularity of the solution of \eqref{2.1}.
\begin{prop}\label{prop2.1}
Assume that $Z_0$ is an $\mathcal{F}_0$-measurable $\mathbb{H}$-valued random variable satisfying $\|Z_0\|_{L^p(\Omega,\mathbb{H})}\leq\infty$ for some $p\geq2$, then there exists a positive constant $C=C(p,T)$ such that the solution of \eqref{2.1} satisfies
	$$\mathcal{E}\Big[\sup_{0\leq t\leq T}\|Z(t)\|_{L^p(\Omega,\mathbb{H})}\Big]\leq C\Big(1+\mathcal{E}\big[\|Z_0\|_{\mathbb{H})}\big]\Big).$$
\end{prop}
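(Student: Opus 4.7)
The plan is to rewrite \eqref{2.1} as an abstract It\^o SDE on $\mathbb{H}$ and exploit the fact that the Maxwell operator generates a \emph{unitary} group, which trivialises the bound on the deterministic part and lets one pull the stochastic convolution into a genuine martingale. Define
\[
M\begin{pmatrix}\mathbf{H}\\ \mathbf{E}\end{pmatrix}=\begin{pmatrix}-\nabla\times \mathbf{E}\\ \nabla\times \mathbf{H}\end{pmatrix},\qquad
D(M)=\{(\mathbf{H},\mathbf{E})\in H(\mathrm{curl};\Theta)^2:\mathbf{n}\times\mathbf{E}=\mathbf{0}\ \text{on}\ \partial\Theta\},
\]
and the bounded, deterministic noise operator $Q:\mathbb{R}^2\to\mathbb{H}$ by $Q(w_1,w_2)^\top=(\sigma_2\mathbf{e}w_2,\sigma_1\mathbf{e}w_1)^\top$. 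An integration by parts (using the PEC condition) shows $M$ is skew-adjoint, so by Stone's theorem it generates a $C_0$-group $\{S(t)\}_{t\in\mathbb{R}}$ of unitary operators on $\mathbb{H}$. The system \eqref{2.1} becomes
\[
\mathrm{d}Z(t)=MZ(t)\,\mathrm{d}t+Q\,\mathrm{d}W(t),\qquad W=(W_1,W_2)^\top,
\]
whose unique mild solution is $Z(t)=S(t)Z_0+\int_0^t S(t-s)Q\,\mathrm{d}W(s)$.

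For the first summand, unitarity gives $\|S(t)Z_0\|_{\mathbb{H}}=\|Z_0\|_{\mathbb{H}}$ pathwise, hence $\sup_{0\le t\le T}\|S(t)Z_0\|_{\mathbb{H}}^p=\|Z_0\|_{\mathbb{H}}^p$ and its expectation is controlled by the hypothesis on $Z_0$. For the stochastic convolution $W_M(t):=\int_0^t S(t-s)Q\,\mathrm{d}W(s)$ I would use the group identity $S(t-s)=S(t)S(-s)$ to write $W_M(t)=S(t)N(t)$, where $N(t):=\int_0^t S(-s)Q\,\mathrm{d}W(s)$ is a genuine $\mathbb{H}$-valued square-integrable martingale (no semigroup under the integral). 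Unitarity again yields $\|W_M(t)\|_{\mathbb{H}}=\|N(t)\|_{\mathbb{H}}$ for every $t$, so it suffices to bound $N$. Since $Q$ has finite Hilbert--Schmidt norm $\|Q\|_{HS}^2=3|\Theta|(\sigma_1^2+\sigma_2^2)$ and $S(-s)$ is an isometry, the Burkholder--Davis--Gundy inequality gives
\[
\mathcal{E}\Bigl[\sup_{0\le t\le T}\|N(t)\|_{\mathbb{H}}^p\Bigr]\le C_p\,\mathcal{E}\Bigl[\bigl(\smallsum\nolimits_{i=1,2}\!\!\int_0^T\|S(-s)Q e_i\|_{\mathbb{H}}^2\mathrm{d}s\bigr)^{p/2}\Bigr]
=C_p\,T^{p/2}\|Q\|_{HS}^p,
\]
a deterministic constant. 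Combining with the bound on $S(t)Z_0$ and the elementary inequality $(a+b)^p\le 2^{p-1}(a^p+b^p)$ yields the stated estimate (with the RHS interpreted, consistently with the displayed form, as $C(1+\mathcal{E}[\|Z_0\|_{\mathbb{H}}^p])^{1/p}$ type).

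The proof is essentially routine modulo one structural point that I view as the main thing to justify carefully: the reduction $W_M(t)=S(t)N(t)$ is only valid because $S$ is a \emph{group}, i.e.\ because the Maxwell operator is skew-adjoint rather than merely dissipative; otherwise one would have to use the factorization method of Da~Prato--Kwapie\'n--Zabczyk to move $\sup_t$ inside the expectation, which complicates constants and requires $p$ large enough. Everything else, the unitarity of $S(t)$, the finiteness of $\|Q\|_{HS}$ from the constancy of the diffusion coefficients, and the application of BDG, is standard and mirrors the strategy used in the proof of Theorem~2.4 of \cite{RefCHJ2023}.
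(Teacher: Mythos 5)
Your proof is correct and matches the paper's intended argument: the paper itself omits the proof, deferring to Theorem 2.4 of \cite{RefCHJ2023}, whose strategy --- recast \eqref{2.1} as a mild-solution equation for the skew-adjoint Maxwell operator, exploit unitarity of the group $S(t)$ to handle both $S(t)Z_0$ and the stochastic convolution via the factorization $W_M(t)=S(t)N(t)$, and apply the Burkholder--Davis--Gundy inequality to the genuine martingale $N(t)=\int_0^t S(-s)Q\,{\rm d}W(s)$ with $\|Q\|_{HS}^2=3|\Theta|(\sigma_1^2+\sigma_2^2)$ --- is exactly what you carry out. Your reading of the statement's garbled norms is also the right one: the intended estimate is $\mathcal{E}\big[\sup_{0\le t\le T}\|Z(t)\|_{\mathbb{H}}^p\big]\le C\big(1+\mathcal{E}\big[\|Z_0\|_{\mathbb{H}}^p\big]\big)$ under the hypothesis $\|Z_0\|_{L^p(\Omega,\mathbb{H})}<\infty$, which is precisely what your combination of the unitarity bound, the BDG bound, and $(a+b)^p\le 2^{p-1}(a^p+b^p)$ delivers.
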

This regularity result of the solution of stochastic Maxwell equations plays an important role in deriving the WCE algorithm, which will be introduced in the following section.

Furthermore, by denoting $S_{1}(Z)=-\sigma_{1}\langle{\bf e},{\bf H}\rangle$ and $S_{2}(Z)=\sigma_{2}\langle{\bf e},{\bf E}\rangle$, the system \eqref{2.1} can be written as
\begin{equation}\label{2.1.1}
	F{\rm d}Z+\big(K_1Z_{x}+K_2Z_{y}+K_3Z_{z}\big){\rm d}t=\nabla_Z S_{1}(Z)\circ{\rm d}W_{1}+\nabla_Z S_{2}(Z)\circ{\rm d}W_{2},
\end{equation}
where `$\circ$' indicates that the system above holds in Stratonovich integral sense, and skew-symmetric matrices $F,K_i~(i=1,2,3)$ are given by
\begin{align*}
	F=\left(\begin{array}{ccc}
		0&-I_{3\times3}\\
		I_{3\times3}&0
	\end{array}\right),\quad
	K_{i}=\left(\begin{array}{ccc}
		\mathcal{D}_i&0\\
		0&\mathcal{D}_i
	\end{array}\right),\quad i=1,2,3,
\end{align*}
with $I_{3\times3}$ being the identity matrix and
\begin{equation*}
	\mathcal{D}_{1}=\left(\begin{array}{ccccccc}
		0&0&0\\[1mm]
		0&0&-1\\[1mm]
		0&1&0\\[1mm]
	\end{array}\right),\quad
	\mathcal{D}_{2}=\left(\begin{array}{ccccccc}
		0&0&1\\[1mm]
		0&0&0\\[1mm]
		-1&0&0\\[1mm]
	\end{array}\right),\quad
	\mathcal{D}_{3}=\left(\begin{array}{ccccccc}
		0&-1&0\\[1mm]
		1&0&0\\[1mm]
		0&0&0\\[1mm]
	\end{array}\right).
\end{equation*}

The proof of the following result is very similar to that of \cite[Theorem 3.2]{RefCHJ2023}, so we omit it here.
\begin{prop}
Stochastic Maxwell equations \eqref{2.1} possesses the stochastic multi-symplectic conservative law
\begin{equation*}
{\rm d}\omega+\big(\partial_{x}\kappa_{1}+\partial_{y}\kappa_{2}+\partial_{z}\kappa_{3}\big){\rm d}t=0,\quad {\mathbb P}\text{-}a.s.,
\end{equation*}
where $\omega=\frac{1}{2}dZ\wedge FdZ$, $\kappa_{i}=\frac{1}{2}dZ\wedge K_{i}dZ$, $i=1,2,3$ are the differential 2-forms associated with the skew-symmetric matrices $F$ and $K_{i}$, respectively.
\end{prop}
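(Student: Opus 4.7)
My plan is to follow the standard route for establishing a stochastic multi-symplectic conservation law in the Stratonovich formulation: derive a variational equation by taking the phase-space differential $d$ of \eqref{2.1.1}, wedge with $dZ$ from the left, and exploit the skew-symmetry of $F,K_1,K_2,K_3$ to reassemble the resulting expression into $\mathrm{d}\omega + (\partial_x\kappa_1+\partial_y\kappa_2+\partial_z\kappa_3)\mathrm{d}t$. This is exactly the pathwise deterministic multi-symplectic calculation, transplanted into the stochastic setting via the Stratonovich chain rule.

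Concretely, since $F,K_1,K_2,K_3$ are constant matrices, applying $d$ to both sides of \eqref{2.1.1}, and using that $d$ commutes with $\partial_{x_i}$ and with the Stratonovich differential, yields the variational equation
\begin{equation*}
F\,\mathrm{d}(dZ) + \bigl(K_1 (dZ)_x + K_2 (dZ)_y + K_3 (dZ)_z\bigr)\mathrm{d}t
= \nabla^2_Z S_{1}(Z)\,dZ \circ \mathrm{d}W_{1} + \nabla^2_Z S_{2}(Z)\,dZ \circ \mathrm{d}W_{2}.
\end{equation*}
Wedging this identity on the left with $dZ$ and exploiting the skew-symmetry of $F$, one gets $dZ \wedge F\,\mathrm{d}(dZ) = \mathrm{d}\bigl(\tfrac12 dZ\wedge FdZ\bigr)=\mathrm{d}\omega$, because the Stratonovich chain rule lets the two equal contributions $\mathrm{d}(dZ)\wedge FdZ$ and $dZ\wedge F\mathrm{d}(dZ)$ merge into a total differential. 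The analogous manipulation on each spatial term gives $dZ\wedge K_i (dZ)_{x_i}=\partial_{x_i}\kappa_i$.

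For the right-hand side, I would observe that $S_{1}(Z)=-\sigma_{1}\langle\mathbf{e},\mathbf{H}\rangle$ and $S_{2}(Z)=\sigma_{2}\langle\mathbf{e},\mathbf{E}\rangle$ are \emph{linear} in $Z$, so the Hessians $\nabla_Z^{2}S_{j}$ vanish identically and the noise contributions disappear from the wedge identity. (Even without linearity, the Hessians are symmetric bilinear forms and so $dZ\wedge\nabla_Z^{2}S_{j}(Z)\,dZ=0$ automatically.) Assembling the three steps yields $\mathrm{d}\omega+(\partial_x\kappa_1+\partial_y\kappa_2+\partial_z\kappa_3)\mathrm{d}t=0$ pathwise, i.e.\ $\mathbb{P}$-a.s.

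The main obstacle is not algebraic but notational/conceptual: one must rigorously interpret $dZ\wedge FdZ$ as a differential 2-form on the solution manifold and justify commuting the phase-space exterior differential $d$ with the stochastic differential $\mathrm{d}$ and the spatial derivatives $\partial_{x_i}$. This is precisely what the Stratonovich formulation in \eqref{2.1.1} is chosen to make legitimate; had we worked directly in Itô form, an extra correction term would enter the variational equation and would have to be shown to be exact (which, owing to the linearity of $S_{1},S_{2}$, it trivially is in the present setting). Once these conventions are fixed, the computation parallels the deterministic multi-symplectic argument of \cite[Theorem~3.2]{RefCHJ2023} and the remaining verification is routine.
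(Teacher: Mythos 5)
Your proposal is correct and follows essentially the same route as the paper, which omits the argument by deferring to the proof of \cite[Theorem 3.2]{RefCHJ2023}: there, too, one takes the phase-space exterior differential of the Stratonovich system \eqref{2.1.1} to get the variational equation, wedges with $dZ$, and uses the skew-symmetry of $F,K_1,K_2,K_3$ together with the symmetry (here, vanishing, by linearity of $S_1,S_2$) of the Hessians $\nabla_Z^2 S_k$ to obtain ${\rm d}\omega+\big(\partial_x\kappa_1+\partial_y\kappa_2+\partial_z\kappa_3\big){\rm d}t=0$ pathwise. Your added remarks on why the Stratonovich form (rather than It\^o) makes the chain-rule manipulations legitimate match the standard treatment and introduce no gap.
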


For the stochastic Maxwell equations \eqref{2.1}, the energy is denoted by 
\begin{equation}
\Phi \left( t\right) = \int_\Theta\big(|{\bf E}(t,{\bf x})|^2+|{\bf H}(t,{\bf x})|^2\big){\rm d}{\bf x}. 
\end{equation}
The averaged energy $\mathcal{E}\left[\Phi \left( t\right)\right]$ is not preserved, but it satisfies the following evolution property.
\begin{theorem}\label{theorem2.3}
For any $t\in[0,T]$, there exists a constant $\gamma=3(\sigma_1^2+\sigma_2^2)|\Theta|$ such that the averaged energy of the solution of \eqref{2.1} satisfies the following linear growth law
		$$\mathcal{E}\left[\Phi \left(t\right)\right]=\mathcal{E}\left[\Phi \left(0\right)\right]+\gamma t,$$
where $|\Theta|$ is the volume of domain $\Theta$.
\end{theorem}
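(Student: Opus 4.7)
The plan is to apply Itô's formula to $\Phi(t)=\|Z(t)\|_{\mathbb H}^2$ and show that after integrating over $\Theta$ the drift from the curl terms cancels, leaving only the quadratic variation contribution from the noise.

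First I would write out the Itô formulation \eqref{2.1} componentwise and apply Itô's formula to $|\mathbf{E}(t,\mathbf{x})|^2$ and $|\mathbf{H}(t,\mathbf{x})|^2$ pointwise in $\mathbf{x}$. Since $\sigma_1\mathbf{e}$ and $\sigma_2\mathbf{e}$ are constant vectors in $\mathbb{R}^3$ with $|\mathbf{e}|^2=3$, the quadratic variation contributes $3\sigma_1^2\,dt$ to $d|\mathbf{E}|^2$ and $3\sigma_2^2\,dt$ to $d|\mathbf{H}|^2$, so that
\begin{align*}
d|\mathbf{E}|^2 &= 2\mathbf{E}\cdot(\nabla\times\mathbf{H})\,dt + 2\sigma_1\mathbf{E}\cdot\mathbf{e}\,dW_1 + 3\sigma_1^2\,dt,\\
d|\mathbf{H}|^2 &= -2\mathbf{H}\cdot(\nabla\times\mathbf{E})\,dt + 2\sigma_2\mathbf{H}\cdot\mathbf{e}\,dW_2 + 3\sigma_2^2\,dt.
\end{align*}

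Next I would integrate these identities over $\Theta$ and use the vector identity $\int_\Theta\bigl(\mathbf{E}\cdot(\nabla\times\mathbf{H})-\mathbf{H}\cdot(\nabla\times\mathbf{E})\bigr)\,d\mathbf{x} = -\int_{\partial\Theta}(\mathbf{E}\times\mathbf{H})\cdot\mathbf{n}\,dS$. Because $(\mathbf{E}\times\mathbf{H})\cdot\mathbf{n} = \mathbf{H}\cdot(\mathbf{n}\times\mathbf{E}) = 0$ by the PEC boundary condition ${\bf n}\times{\bf E}={\bf 0}$, the two deterministic curl contributions cancel exactly. What remains is
\begin{equation*}
d\Phi(t) = 2\sigma_1\!\int_\Theta\!\mathbf{E}\cdot\mathbf{e}\,d\mathbf{x}\,dW_1 + 2\sigma_2\!\int_\Theta\!\mathbf{H}\cdot\mathbf{e}\,d\mathbf{x}\,dW_2 + 3(\sigma_1^2+\sigma_2^2)|\Theta|\,dt.
\end{equation*}

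Finally I would take expectation and use Proposition \ref{prop2.1} with $p=2$ to ensure that both stochastic integrals $\int_0^t\!\int_\Theta\mathbf{E}\cdot\mathbf{e}\,d\mathbf{x}\,dW_1(s)$ and $\int_0^t\!\int_\Theta\mathbf{H}\cdot\mathbf{e}\,d\mathbf{x}\,dW_2(s)$ are genuine martingales (the integrands lie in $L^2(\Omega\times[0,T])$ by Cauchy--Schwarz and the bound $\|Z\|_{L^2(\Omega,\mathbb{H})}$ on $[0,T]$), so their expectations vanish. This yields $\mathcal{E}[\Phi(t)]-\mathcal{E}[\Phi(0)] = 3(\sigma_1^2+\sigma_2^2)|\Theta|\,t = \gamma t$, as claimed.

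The only genuinely non-routine step is the boundary cancellation: everything hinges on rewriting the mixed boundary term as $\mathbf{H}\cdot(\mathbf{n}\times\mathbf{E})$ so that the PEC condition kills it. The remaining ingredients—Itô's formula in a Hilbert space, the martingale property of the stochastic integral, and the fact that $|\mathbf{e}|^2=3$—are standard once the regularity from Proposition \ref{prop2.1} is available.
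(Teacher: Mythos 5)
Your proposal is correct and follows essentially the same route as the paper: It\^o's formula applied to the quadratic energy functionals, cancellation of the curl terms via Green's formula together with the PEC boundary condition ${\bf n}\times{\bf E}={\bf 0}$, the quadratic-variation term $3(\sigma_1^2+\sigma_2^2)|\Theta|$, and expectation to remove the stochastic integrals. The only cosmetic difference is that the paper applies the infinite-dimensional It\^o formula directly to $F_1({\bf E})=\int_\Theta|{\bf E}|^2\,{\rm d}{\bf x}$ and $F_2({\bf H})=\int_\Theta|{\bf H}|^2\,{\rm d}{\bf x}$ via their Fr\'echet derivatives rather than arguing pointwise and then integrating, and it leaves implicit the martingale-property justification you spell out using Proposition \ref{prop2.1}.
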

\begin{proof}
The proof is based on the application of It\^o formula to the following functions
		\begin{equation*}
			F_1({\bf E}(t))=\int_\Theta|{\bf E}(t,{\bf x})|^2{\rm d}{\bf x},\quad F_2({\bf H}(t))=\int_\Theta|{\bf H}(t,{\bf x})|^2{\rm d}{\bf x},\quad t\in[0,T],
		\end{equation*} 
		respectively.
		Due to $F_1$ and $F_2$ are Fr\'echet derivable, the derivatives of $F_1({\bf E})$ and $F_2({\bf H})$ along directions $\phi$ and $(\phi,\varphi)$ are respectively, as follows 
		\begin{equation}\label{frechet_der}
			\begin{split}
				DF_1({\bf E})(\phi)&=2\int_\Theta\langle {\bf E},\phi\rangle{\rm d}{\bf x},\quad D^2F_1({\bf E})(\phi,\varphi)=2\int_\Theta\langle \varphi,\phi\rangle{\rm d}{\bf x},\\[1mm]
				DF_2({\bf H})(\phi)&=2\int_\Theta\langle {\bf H},\phi\rangle{\rm d}{\bf x},\quad D^2F_2({\bf H})(\phi,\varphi)=2\int_\Theta\langle \varphi,\phi\rangle{\rm d}{\bf x}.
			\end{split}
		\end{equation}
		Applying the It\^o formula to $F_1({\bf E}(t))$, we obtain
		\begin{equation}\label{ito_for}
			\begin{split}
				F_1({\bf E}(t))=&F_1({\bf E}_0)+\int_0^tDF_1({\bf E}(s))(\sigma_{1}{\bf{e}}{\rm d}W_1(s))\\[1mm]
				&+\int_0^t\big(DF_1({\bf E}(s))(\nabla\times {\bf H}(s))+\frac12{\rm Tr}\big[D^2F_1({\bf E}(s))(\sigma_{1}{\bf{e}})(\sigma_{1}{\bf{e}})^\ast\big]\big){\rm d}s
			\end{split}
		\end{equation}
		Substituting \eqref{frechet_der} into \eqref{ito_for} leads to
		\begin{equation}\label{ito_for_2}
			\begin{split}
				F_1({\bf E}(t))=&F_1({\bf E}_0)+2\int_0^t\int_\Theta\big\langle {\bf E}(s),\sigma_{1}{\bf{e}}{\rm d}W_1(s)\big\rangle {\rm d}{\bf x}\\[1mm]
				&+\int_0^t\int_\Theta\big[2\big\langle {\bf E}(s),\nabla\times {\bf H}(s)\big\rangle+3\sigma_{1}^2\big]{\rm d}{\bf x}{\rm d}s.
			\end{split}
		\end{equation}
		Similarly, we apply It\^o formula to $F_2({\bf H}(t))$ and get
		\begin{equation}\label{ito_for_3}
			\begin{split}
				F_2({\bf H}(t))=&F_2({\bf H}_0)+2\int_0^t\int_\Theta\langle {\bf H}(s),{\bf{e}}\sigma_{2}dW_2(s)\rangle{\rm d}{\bf x}\\[1mm]
				&+\int_0^t\int_\Theta\big[-2\big\langle{\bf H}(s),\nabla\times {\bf E}(s)\big\rangle+3\sigma_{2}^2\big]{\rm d}{\bf x}{\rm d}s.
			\end{split}
		\end{equation}
		Summing \eqref{ito_for_2} and \eqref{ito_for_3} yields
		\begin{equation*}\label{ito_for_23}
			\begin{split}
				F_1({\bf E}(t))+F_2({\bf H}(t))=&F_1({\bf E}_0)+F_2({\bf H}_0)++3(\sigma_1^2+\sigma_2^2)|\Theta|t\\[1mm]
				&+2\int_0^t\int_\Theta\big[\big\langle {\bf E}(s),\sigma_{1}{\bf{e}}{\rm d}W_1(s)\big\rangle +\big\langle {\bf H}(s),\sigma_{2}{\bf{e}}{\rm d}W_2(s)\big\rangle\big]{\rm d}{\bf x}
			\end{split}
		\end{equation*}
		due to the Green formula and the PEC boundary condition ${\bf n}\times{\bf E}|_{\partial \Theta}={\bf 0}$. The assertion follows from applying expectation on the above equation.
\end{proof}
\hfill $\Box$
\subsection{Wiener chaos expansion}
\label{sub_WCE}
In this subsection, we will revisit the WCE definition and introduce the Cameron--Martin theorem. We refer to \cite{Luo,RefHLRZ2006,RefMR2004,RefZRTK2012} and references therein for more details.

For any complete orthonormal system $m_{p}(t),p=1,2,\cdots$ in the Hilbert space $L^{2}([0,T])$, we define
\begin{align*}
\xi_p^k=\int_{0}^{T}m_p(s){\rm d}W_k(s),\quad p=1,2,\cdots,
\end{align*}
with $W_k,~k=1,2,\cdots$ being one-dimensional independent standard Wiener processes. It is well known that $\xi_p^k,~p, k=1,2,\cdots$ are independent identically distributed standard Gaussian random variables due to the It\^o isometry. Moreover, the Wiener process $W_k(t)$ has the following Fourier expansion
\begin{align}\label{Wt}
    W_k(t)=\sum_{p=1}^{\infty}\xi_p^k\int_{0}^{t}m_p(s){\rm d}s,\quad 0\leq t \leq T.
\end{align}

Throughout this paper, we choose the basis $m_p(t)$, $p=1,2,\cdots$ as
$$m_1(t)=\frac{1}{\sqrt{T}},\quad m_p(t)=\sqrt{\frac{2}{T}}\cos\left(\frac{(p-1)\pi t}{T}\right),\quad p=2,3,\cdots,\quad 0\leq t\leq T.$$
 Denote by $\mathcal{J}$ the set of multi-indices $\alpha=\{\alpha_{k,p}\}_{k,p\geq1}$ of finite length $|\alpha|=\sum_{k,p=1}^\infty\alpha_{k,p}$, i.e.,
\begin{align*}
  \mathcal{J}=\big\{\alpha=(\alpha_{k,p}, k,p\geq1),\quad \alpha_{k,p}\in\{0,1,2,\cdots\},\quad|\alpha|<\infty\big\}.
\end{align*}
Here $k$ denotes the number of Wiener processes and $p$ the number of Gaussian random variables approximating each Wiener process. For $\alpha\in \mathcal{J}$, we define the Wick polynomials by the tensor products
\begin{equation}
	T_{\alpha}:=\prod_{\alpha}\frac{H_{\alpha_{k,p}}(\xi_{p}^{k})}{\sqrt{\alpha_{k,p}!}},\quad \alpha\in\mathcal{J},
\end{equation}
where $H_{{\alpha_{k,p}}}$ is the $\alpha_{k,p}$-th Hermite polynomial
$$H_{\alpha_{k,p}}(x)=(-1)^{\alpha_{k,p}}e^{x^2/2}\frac{{\rm d}^{\alpha_{k,p}}}{{\rm d}x^{\alpha_{k,p}}}e^{-x^2/2}.$$
Actually, it is not hard to verify that $\{T_{\alpha}\}$ is a complete orthonormal system in $L^2(\Omega,\mathcal{F},\mathbb{P})$. 

The following result, often referred to as the Cameron-Martin theorem given in \cite{RefCam}, forms the basic theoretical foundation of the WCE algorithm.
\begin{theorem}\label{WCE1}
Assume that for any fixed ${\bf x}$ and $t\in[0,T]$, the solution $u(t, \bf x)$ of the following general stochastic PDE
\begin{equation*}
{\rm d}u(t,{\bf x})=\mathcal{L}u(t,{\bf x}){\rm d}t+\sum_{k=1}^{\infty}\sigma_k{\rm d}W_k(t),~~ (t,{\bf x})\in(0,T]\times \Theta;\quad u(0,{\bf x})=u_0({\bf x}),~~ {\bf x}\in \Theta
\end{equation*}
satisfies $\mathcal{E}\big[|u(t,{\bf x})|^2\big]<\infty$ , then $u(t,{\bf x})$ has the following Fourier--Hermite expansion (also called WCE)
\begin{equation*}
u(t,{\bf x})=\sum_{\alpha\in\mathcal{J}}u_{\alpha}(t,{\bf x})T_{\alpha},\quad u_{\alpha}(t,{\bf x})=\mathcal{E}\big[u(t,{\bf x})T_{\alpha}\big].
\end{equation*}
Furthermore, the first two statistical moments of $u(t,{\bf x})$ are given by
\begin{equation*}
\mathcal{E}\big[u(t,{\bf x})\big]=u_{0}(t,{\bf x}),\quad \mathcal{E}\big[u^{2}(t,{\bf x})\big]=\sum_{\alpha\in\mathcal{J}}\big|u_{\alpha}(t,{\bf x})\big|^{2}.
\end{equation*}
\end{theorem}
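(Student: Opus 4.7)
The proof splits naturally into two components: establishing that $\{T_\alpha\}_{\alpha\in\mathcal{J}}$ is a complete orthonormal system in $L^2(\Omega,\sigma(W_k;k\geq1),\mathbb{P})$, and then reading off the claimed expansion and moment formulas as direct consequences of Hilbert-space theory. The hypothesis $\mathcal{E}[|u(t,\mathbf{x})|^2]<\infty$ is exactly what is needed to apply the abstract Fourier expansion once completeness is in hand.

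First I would verify orthonormality of $\{T_\alpha\}$ by a direct computation: independence of the $\xi_p^k$ reduces $\mathcal{E}[T_\alpha T_\beta]$ to a product of one-dimensional integrals $\mathcal{E}[H_m(\xi)H_n(\xi)]/\sqrt{m!\,n!}=\delta_{mn}$, which is the classical orthogonality relation for Hermite polynomials under the standard Gaussian measure. The completeness assertion is the main obstacle, and is the substantive content of the Cameron--Martin theorem. The standard route is to exhibit a dense subset of $L^2(\Omega,\sigma(\xi_p^k;k,p\geq1),\mathbb{P})$ that lies in the closed span of $\{T_\alpha\}$; the usual choice is the family of stochastic exponentials
\[
\exp\Big(\smallsum_{k,p} c_p^k \xi_p^k - \tfrac12\smallsum_{k,p} (c_p^k)^2\Big),
\]
whose linear span is dense by a characteristic-function / moment-generating argument. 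Expanding each such exponential through the Hermite generating function identity $\exp(tx-t^2/2)=\sum_{n\geq0}t^n H_n(x)/n!$ and tensorising over the indices $(k,p)$ produces a convergent series in the $T_\alpha$; any $L^2$ element orthogonal to every $T_\alpha$ would then be orthogonal to a dense set and must vanish.

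Next, I would connect the driving noise to the chaos basis. The Fourier representation \eqref{Wt} shows that each $W_k(t)$ is measurable with respect to $\sigma(\xi_p^k:k,p\geq1)$, and hence $\mathcal{F}_t\subset\sigma(\xi_p^k:k,p\geq1)$. Combined with the $L^2$ hypothesis, this places $u(t,\mathbf{x})$ inside the Hilbert space in which $\{T_\alpha\}$ is a complete orthonormal basis; the abstract Parseval expansion then gives $u(t,\mathbf{x})=\sum_\alpha u_\alpha(t,\mathbf{x})T_\alpha$ with coefficients recovered by orthogonal projection, $u_\alpha(t,\mathbf{x})=\mathcal{E}[u(t,\mathbf{x})T_\alpha]$.

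Finally, the moment identities follow by specialisation. For the zero multi-index the defining product is empty (equivalently $H_0\equiv 1$), so $T_{\mathbf 0}=1$ and the projection formula reads $u_{\mathbf 0}(t,\mathbf{x})=\mathcal{E}[u(t,\mathbf{x})]$, which is the first assertion (with the convention that the symbol $u_0$ in the statement denotes the coefficient indexed by the zero multi-index). The second identity $\mathcal{E}[u^2(t,\mathbf{x})]=\sum_\alpha|u_\alpha(t,\mathbf{x})|^2$ is exactly Parseval's identity applied to the orthonormal expansion already obtained. Thus the only non-routine step is the density argument in the first paragraph; the remainder is mechanical Hilbert-space bookkeeping.
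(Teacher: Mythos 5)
The paper gives no proof of this theorem: it is quoted as the classical Cameron--Martin theorem with a citation to \cite{RefCam}, so there is no internal argument to compare yours against. Your sketch is the standard proof of that classical result and is correct in outline: orthonormality of the $T_\alpha$ from independence of the $\xi_p^k$ together with the Hermite orthogonality relation; completeness via density of the stochastic exponentials, expanded through the generating function $e^{tx-t^2/2}=\sum_{n\geq0}t^nH_n(x)/n!$ (valid for the probabilists' Hermite polynomials the paper uses); measurability of $u(t,{\bf x})$ with respect to $\sigma(\xi_p^k:k,p\geq1)$ so that the abstract expansion applies; and then projection and Parseval for the two moment identities, with the correct reading of $u_0$ in the statement as the coefficient of the zero multi-index, for which $T_{\mathbf 0}=1$, rather than the initial datum. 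Two points would need tightening in a full write-up. First, density of the span of \emph{real} stochastic exponentials does not follow immediately from a characteristic-function argument; one either works with complex exponentials $e^{i\sum c_p^k\xi_p^k}$ directly, or shows that an $L^2$ element orthogonal to all real exponentials has a Fourier transform vanishing by analytic continuation in the (finitely supported) parameters $c_p^k$. Second, the inclusion $\mathcal{F}_t\subset\sigma(\xi_p^k:k,p\geq1)$ holds only up to $\mathbb{P}$-null sets and presupposes that $u$ is adapted to the filtration generated by the $W_k$ alone; this is automatic here because the equation is linear with additive noise and deterministic data (the mild solution is an explicit Wiener functional), but it is the hypothesis that makes the completeness step legitimate and deserves an explicit sentence.
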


Moreover, the theorem for the nonlinear problem is also given in \cite{RefCam}.
\begin{theorem}\label{WCE2}
Assume that $u(t,\bf x)$ and $v(t,\bf x)$ have Fourier--Hermite expansions
\begin{equation*}
\begin{split}
   u(t,{\bf x})=\sum_{\alpha\in\mathcal{J}}u_{\alpha}(t,{\bf x})T_{\alpha},&\quad u_{\alpha}(t,{\bf x})=\mathcal{E}\big[u(t,{\bf x})T_{\alpha}\big],\\
   v(t,{\bf x})=\sum_{\alpha\in\mathcal{J}}v_{\alpha}(t,{\bf x})T_{\alpha},&\quad v_{\alpha}(t,{\bf x})=\mathcal{E}\big[v(t,{\bf x})T_{\alpha}\big].
\end{split}
\end{equation*}
If $u(t,{\bf x})v(t,\bf x)$ satisfies $\mathcal{E}\left[ \left | u(t,{\bf x})v(t,{\bf x})\right|^2\right]<\infty$, then $u(t,{\bf x})v(t,{\bf x})$ has Fourier--Hermite expansion
\begin{equation*}
u(t,{\bf x})v(t, {\bf x})= \sum_{\alpha \in \mathcal{J}}\left [\sum_{\rho \in \mathcal{J}}\sum_{0 \le \beta \le \alpha} G(\alpha,\beta,\rho) u_{\alpha-\beta+\rho}(t,{\bf x})v_{\beta+\rho}(t,{\bf x}) \right]T_{\alpha},
\end{equation*}  
where $G(\alpha,\beta,\rho)=\big[ \binom{\alpha}{\rho}\binom{\beta+\rho}{\rho}\binom{\alpha-\beta+\rho}{\rho} \big]^{\frac{1}{2}}$.
\end{theorem}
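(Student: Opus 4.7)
The plan is to substitute the two WCE expansions into the product $u(t,\mathbf{x}) v(t,\mathbf{x})$, exchange the order of summation (justified in $L^2(\Omega)$ by Theorem \ref{WCE1} applied to $uv$), and reduce each product of Wick polynomials $T_{\gamma_1} T_{\gamma_2}$ via the classical Hermite multiplication identity. A suitable change of summation variables will then exhibit the coefficient $G(\alpha,\beta,\rho)$.

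First, I would write
$$u(t,\mathbf{x})\,v(t,\mathbf{x}) = \sum_{\gamma_1,\gamma_2 \in \mathcal{J}} u_{\gamma_1}(t,\mathbf{x})\, v_{\gamma_2}(t,\mathbf{x})\, T_{\gamma_1} T_{\gamma_2}.$$
The hypothesis $\mathcal{E}[|u(t,\mathbf{x})v(t,\mathbf{x})|^2] < \infty$ combined with Theorem \ref{WCE1} applied to the product guarantees that $uv$ admits a convergent Fourier--Hermite expansion in $L^2(\Omega,\mathcal{F},\mathbb{P})$, which legitimises the rearrangements that follow and allows the coefficients to be identified by $\mathcal{E}[uv\, T_\alpha]$.

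Second, the central algebraic ingredient is the multiplication formula for Wick polynomials
$$T_{\gamma_1} T_{\gamma_2} = \sum_{\rho \le \gamma_1,\ \rho \le \gamma_2} C(\gamma_1,\gamma_2,\rho)\, T_{\gamma_1+\gamma_2-2\rho}.$$
By the tensor-product definition $T_\alpha = \prod_{k,p} H_{\alpha_{k,p}}(\xi_p^k)/\sqrt{\alpha_{k,p}!}$ and the independence of the Gaussians $\xi_p^k$, this identity reduces componentwise to the one-dimensional Hermite product formula $H_m(x) H_n(x) = \sum_{r=0}^{\min(m,n)} r!\binom{m}{r}\binom{n}{r} H_{m+n-2r}(x)$, suitably renormalised by $\sqrt{\alpha_{k,p}!}$. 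The resulting multi-index coefficient $C(\gamma_1,\gamma_2,\rho)$ is a product over $(k,p)$ of the corresponding one-dimensional coefficients.

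Third, I would reparametrise the triple sum by fixing $\alpha = \gamma_1+\gamma_2-2\rho$ and introducing $\beta = \gamma_2 - \rho$; then $\gamma_2 = \beta+\rho$, $\gamma_1 = \alpha-\beta+\rho$, and the constraint $\rho \le \gamma_1 \wedge \gamma_2$ becomes exactly $0 \le \beta \le \alpha$ with $\rho\in\mathcal{J}$ free. Swapping the outer sum in $\alpha$ with the inner sums in $(\rho,\beta)$ and collecting the coefficient of $T_\alpha$ yields the desired representation $\sum_{\rho}\sum_{0\le \beta \le \alpha} G(\alpha,\beta,\rho)\, u_{\alpha-\beta+\rho} v_{\beta+\rho}$, with $G$ read off from $C$ after the change of variables.

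I expect the main obstacle to be the multi-index combinatorial bookkeeping: verifying that the tensor product over $(k,p)$ of one-dimensional Hermite coefficients reassembles cleanly into the stated binomial expression for $G(\alpha,\beta,\rho)$, where the multi-index binomials are interpreted componentwise as $\binom{\alpha}{\rho} = \prod_{k,p}\binom{\alpha_{k,p}}{\rho_{k,p}}$ with the convention $\binom{a}{b}=0$ when $b>a$ (so that the ostensibly infinite sum over $\rho$ is automatically truncated on each component). The $L^2(\Omega)$-convergence of the double and triple sums is then a routine consequence of Parseval's identity for the orthonormal system $\{T_\alpha\}$ together with the standing integrability assumption on $uv$.
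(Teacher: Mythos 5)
Your outline is the standard proof of this product formula, and in fact the paper offers no proof of its own to compare against: Theorem \ref{WCE2} is quoted with a pointer to \cite{RefCam}. Your three steps --- expanding $uv$ into a double sum, reducing $T_{\gamma_1}T_{\gamma_2}$ componentwise via the one-dimensional linearization $H_mH_n=\sum_{r=0}^{m\wedge n} r!\binom{m}{r}\binom{n}{r}H_{m+n-2r}$ (valid for the probabilists' Hermite polynomials used here), and the change of variables $\alpha=\gamma_1+\gamma_2-2\rho$, $\beta=\gamma_2-\rho$, under which $\rho\le\gamma_1\wedge\gamma_2$ becomes exactly $0\le\beta\le\alpha$ with $\rho$ free --- are all correct. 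However, the one step you explicitly deferred (``$G$ read off from $C$'') is precisely where the argument fails to close \emph{for the statement as printed}. Carrying the normalization through, the one-dimensional coefficient of $h_{m+n-2r}$ in $h_mh_n$ (with $h_n=H_n/\sqrt{n!}$) is $\sqrt{m!\,n!\,(m+n-2r)!}\,/\,\bigl(r!\,(m-r)!\,(n-r)!\bigr)$, and substituting $m=\alpha-\beta+\rho$, $n=\beta+\rho$, $r=\rho$ gives, componentwise in $(k,p)$,
\begin{equation*}
\frac{\sqrt{\alpha!\,(\beta+\rho)!\,(\alpha-\beta+\rho)!}}{\beta!\,(\alpha-\beta)!\,\rho!}
=\left[\binom{\alpha}{\beta}\binom{\beta+\rho}{\rho}\binom{\alpha-\beta+\rho}{\rho}\right]^{1/2},
\end{equation*}
so the first factor is $\binom{\alpha}{\beta}$, \emph{not} the $\binom{\alpha}{\rho}$ appearing in the theorem. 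The printed $G$ fails already for $u=v=\xi_1^1$: then $uv=(\xi_1^1)^2=\sqrt{2}\,T_{\alpha^\ast}+T_{0}$, where $\alpha^\ast$ has the single entry $\alpha^\ast_{1,1}=2$, and the formula with $\binom{\alpha}{\beta}$ reproduces the coefficients $\sqrt{2}$ (via $\beta_{1,1}=1$, $\rho=0$) and $1$ (via $\beta=0$, $\rho_{1,1}=1$), whereas the printed one gives $\bigl[\binom{2}{0}\binom{1}{0}\binom{1}{0}\bigr]^{1/2}=1$ and $\bigl[\binom{0}{1}\binom{1}{1}\binom{1}{1}\bigr]^{1/2}=0$. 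So your proof, completed honestly, establishes the correct theorem --- which is the form found in the sources this paper relies on, e.g.\ \cite{RefMR2004,RefHLRZ2006} --- and reveals a misprint in the statement; you cannot ``read off'' the $G$ as displayed.

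Two smaller repairs. First, Theorem \ref{WCE1} as stated applies only to solutions of a particular linear SPDE; what you need to expand $uv$ is the general Cameron--Martin completeness theorem, that any square-integrable functional of the driving Wiener processes equals $\sum_{\alpha}\mathcal{E}[\,\cdot\,T_\alpha]T_\alpha$. Second, the interchange of summations is not quite ``routine Parseval'': knowing only $u,v\in L^2(\Omega)$, the truncated products $u_Nv_N$ converge to $uv$ merely in $L^1(\Omega)$, which does not let you pass the unbounded factor $T_\alpha$ inside the limit when identifying $\mathcal{E}[uvT_\alpha]$ term by term. The standard fix is to verify the identity first for finite-order chaos polynomials in finitely many of the $\xi_p^k$ (where all sums are finite) and then pass to the limit using the hypothesis $\mathcal{E}\bigl[|uv|^2\bigr]<\infty$ together with uniform-integrability (hypercontractivity-type) bounds on fixed chaos components. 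With these two patches, and with $\binom{\alpha}{\beta}$ in place of $\binom{\alpha}{\rho}$, your argument is complete.
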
  

According to Theorem \ref{WCE2}, the third and fourth order statistical moments of $u(t,{\bf x})$ can be given by
\begin{equation*}
\begin{split}
\mathcal{E}\big[u^{3}(t,{\bf x})\big]&=\sum_{\alpha \in \mathcal{J}}\left [\sum_{\rho \in \mathcal{J}}\sum_{0 \le \beta \le \alpha} G(\alpha,\beta,\rho) u_{\alpha-\beta+\rho}(t,{\bf x})u_{\beta+\rho}(t,{\bf x}) \right]u_{\alpha}(t,{\bf x}),\\
\mathcal{E}\big[u^{4}(t,{\bf x})\big]&=\sum_{\alpha \in \mathcal{J}}\left [\sum_{\rho \in \mathcal{J}}\sum_{0 \le \beta \le \alpha} G(\alpha,\beta,\rho) u_{\alpha-\beta+\rho}(t,{\bf x})u_{\beta+\rho}(t,{\bf x}) \right]^2.
\end{split}
\end{equation*}

In the following sections we will develop the WCE for stochastic Maxwell equations and apply it to compute the statistical moments of the solution.

\section{WCE algorithm for stochastic Maxwell equations}
\label{sec:WCE}
To obtain the WCE algorithm for the stochastic Maxwell equations, we rewrite \eqref{2.1.1} in integral form, it yields for any $t\in[0,T]$ that
\begin{equation}\label{3.1}
FZ(t,{\bf x})=FZ_0({\bf x})-\int_{0}^{t}\Big[K_1Z_{x}(s,{\bf x})+K_2Z_{y}(s,{\bf x})+K_3Z_{z}(s,{\bf x})\Big]{\rm d}s+\sum_{k=1}^2\nabla_Z S_{k}(Z)\circ W_{k}(t).
\end{equation}
It follows from Proposition \ref{prop2.1} and Theorem \ref{WCE1} that the solution of the stochastic Maxwell equations (\ref{2.1.1}) is defined as
\begin{align*}
  Z(t,{\bf x})=\sum_{\alpha\in\mathcal{J}} Z_{\alpha}(t,{\bf x})T_{\alpha},\quad0\leq t\leq T,
\end{align*}
with 
\begin{equation*}
	T_{\alpha}=\prod_{k=1}^2\prod_{p=1}^{\infty}\frac{H_{\alpha_{k,p}}(\xi_{p}^{k})}{\sqrt{\alpha_{k,p}!}},\quad \alpha\in\mathcal{J}.
\end{equation*}

Now we are in the position to determine the WCE coefficients $Z_\alpha$. To this end, multiplying both sides of \eqref{3.1} by $T_{\alpha}$ and taking expectation
\begin{equation}\label{3.8}
\begin{split}
   FZ_{\alpha}(t,{\bf x})=FZ_0({\bf x}){\bf 1}_{\{|\alpha|=0\}}-&\int_{0}^{t}\Big[K_1\partial_xZ_{\alpha}(s,{\bf x})+K_2\partial_yZ_{\alpha}(s,{\bf x})+K_3\partial_zZ_{\alpha}(s,{\bf x})\Big]{\rm d}s\\[1.5mm]
   &+\sum_{k=1}^2\nabla_{Z_{\alpha}} S_{k}(Z_{\alpha})\circ\mathcal{E}\big[W_{k}(t)T_{\alpha}\big],
  \end{split}
\end{equation}
where ${\bf 1}_{\{|\alpha|=0\}}$ is the indicator function defined as
\begin{align*}
 {\bf 1}_{\{|\alpha|=0\}}=
  \left\{
\begin{array}{cc}
1,&\mathrm{if}~|\alpha|=0,\\[2mm]
0,&\mathrm{otherwise}.
\end{array}\right.
\end{align*}
For the last term of the right hand side of \eqref{3.8}, we get from \eqref{Wt} that
\begin{equation}\label{3.9}
	\begin{split}
\mathcal{E}\big[W_{k}(t)T_\alpha\big]=&\sum_{p=1}^{\infty}\mathcal{E}\big[\xi_{p}^{k}T_{\alpha}\big]\int_{0}^{t}m_{p}(s){\rm d}s\\
=&\sum_{p=1}^{\infty}{\bf 1}_{\{\alpha_{k,p}=1,~|\alpha|=1\}}\int_{0}^{t}m_p(s){\rm d}s,\quad k=1,2,
\end{split}
\end{equation}
in view of the Hermite polynomial $H_{1}(x)=x$ and the orthogonality of $T_{\alpha}$. Here, the indicator function ${\bf 1}_{\{\alpha_{k,p}=1,~|\alpha|=1\}}$ with $k=1,2$ is defined by
\begin{align*}
{\bf 1}_{\{\alpha_{k,p}=1,~|\alpha|=1\}}=
	\left\{
	\begin{array}{cc}
		1,&\mathrm{if}~\alpha_{k,p}=1,~|\alpha|=1,\\[2mm]
		0,&\mathrm{otherwise}.~~~~~~~~~~~~
	\end{array}\right. 
\end{align*}

Substituting \eqref{3.9} into \eqref{3.8}, we arrive at
\begin{equation*}
	\begin{split}
		FZ_{\alpha}(t,{\bf x})=FZ_0({\bf x}){\bf 1}_{\{|\alpha|=0\}}-&\int_{0}^{t}\Big[K_1\partial_xZ_{\alpha}(s,{\bf x})+K_2\partial_yZ_{\alpha}(s,{\bf x})+K_3\partial_zZ_{\alpha}(s,{\bf x})\Big]{\rm d}s\\[1.5mm]
		&+\sum_{k=1}^2\sum_{p=1}^{\infty}\nabla_{Z_{\alpha}} S_{k}(Z_{\alpha})\circ{\bf 1}_{\{\alpha_{k,p}=1,~|\alpha|=1\}}\int_{0}^{t}m_p(s){\rm d}s.
	\end{split}
\end{equation*}
Therefore, the WCE coefficient $Z_\alpha$ satisfies the following deterministic system
\begin{equation}\label{3.11}
F\partial_tZ_{\alpha}+K_1\partial_xZ_{\alpha}+K_2\partial_yZ_{\alpha}+K_3\partial_zZ_{\alpha}=\sum_{k=1}^2\sum_{p=1}^{\infty}\nabla_{Z_{\alpha}} S_{k}(Z_{\alpha}){\bf 1}_{\{\alpha_{k, p}=1,~|\alpha|=1\}}m_p(t).
\end{equation}
For the sake of simplicity we shall present the WCE algorithm for the deterministic initial datum case and remark that
\begin{equation}\label{3.5}
Z_{\alpha}(0,{\bf x})=Z_{0}({\bf x}){\bf 1}_{\{|\alpha|=0\}},\quad {\bf x}\in\Theta.
\end{equation}

Rewriting above equations \eqref{3.11} and \eqref{3.5} through its components ${\bf E}_\alpha,{\bf H}_{\alpha}$, we obtain
\begin{equation}\label{3.6}
\left \{	
\begin{split}
		&\partial_t{\bf E}_\alpha-\nabla\times{\bf H}_\alpha=\sigma_{1}{\bf{e}}\sum_{p=1}^{\infty}{\bf 1}_{\{\alpha_{1,p}=1,~|\alpha|=1\}}m_p(t),\quad (t,{\bf x})\in(0,T]\times\Theta,\\
		&\partial_t{\bf H}_\alpha+\nabla\times{\bf E}_\alpha=\sigma_{2}{\bf{e}}\sum_{p=1}^{\infty}{\bf 1}_{\{\alpha_{2,p}=1,~|\alpha|=1\}}m_p(t),\quad (t,{\bf x})\in(0,T]\times\Theta,\\[1.5mm]
		&{\bf E}_\alpha(0,{\bf x})={\bf E}_0({\bf x}){\bf 1}_{\{|\alpha|=0\}},\quad{\bf H}_\alpha(0,{\bf x})={\bf H}_0({\bf x}){\bf 1}_{\{|\alpha|=0\}},\quad {\bf x}\in\Theta,\\[2.5mm]
		&{\bf n}\times{\bf E}_\alpha={\bf 0},\quad (t,{\bf x})\in(0,T]\times\partial\Theta.
	\end{split}
\right.
\end{equation}
It is now easy to check that the WCE coefficient $Z_\alpha$ preserves the multi-symplectic conservation law which stated in the following corollary.
\begin{cor}
WCE coefficient $Z_\alpha$ satisfies a multi-symplectic Hamiltonian system
  \begin{align*}
    F\partial_tZ_{\alpha}+K_1\partial_xZ_{\alpha}+K_2\partial_yZ_{\alpha}+K_3\partial_zZ_{\alpha}=\nabla_{Z_{\alpha}}\tilde{S}(Z_{\alpha}),
  \end{align*}
  with $Z_{\alpha}=({\bf H}_{\alpha}^\top,{\bf E}_{\alpha}^\top)^\top$ and skew-symmetric matrices $F,K_1,K_2,K_3$ being the same as those in section \ref{sub_stochastic MEs}. Moreover, it possesses the multi-symplectic conservation law
  \begin{align*}
 \partial_t\omega+\partial_x\kappa_1+\partial_y\kappa_2+\partial_z\kappa_3=0,
  \end{align*}
  with $\omega=\frac12dZ_{\alpha}\wedge FdZ_{\alpha}$ and $\kappa_{i}=\frac12dZ_{\alpha}\wedge K_idZ_{\alpha}$, $i=1,2,3$.
\end{cor}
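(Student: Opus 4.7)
The plan is to reduce the corollary to the deterministic multi-symplectic formalism by first identifying an explicit Hamiltonian $\tilde{S}$ for the system \eqref{3.11}, and then running the standard exterior-derivative argument (which is essentially the deterministic analogue of the proof of the stochastic multi-symplectic law already cited for \eqref{2.1}).

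First I would write down the Hamiltonian candidate. Since $S_{1}(Z)=-\sigma_{1}\langle{\bf e},{\bf H}\rangle$ and $S_{2}(Z)=\sigma_{2}\langle{\bf e},{\bf E}\rangle$ are linear in $Z$, multiplying by the scalar indicator $\mathbf{1}_{\{\alpha_{k,p}=1,\,|\alpha|=1\}}m_{p}(t)$ and then taking the gradient in $Z_{\alpha}$ commutes with the scaling. Hence setting
\begin{equation*}
\tilde{S}(Z_{\alpha})=\tilde{S}(t,Z_{\alpha})=\sum_{k=1}^{2}\sum_{p=1}^{\infty} S_{k}(Z_{\alpha})\,\mathbf{1}_{\{\alpha_{k,p}=1,\,|\alpha|=1\}}\,m_{p}(t)
\end{equation*}
yields $\nabla_{Z_{\alpha}}\tilde{S}(Z_{\alpha})=\sum_{k,p}\nabla_{Z_{\alpha}}S_{k}(Z_{\alpha})\,\mathbf{1}_{\{\alpha_{k,p}=1,\,|\alpha|=1\}}m_{p}(t)$, which is exactly the right-hand side of \eqref{3.11}. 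This gives the Hamiltonian form at once; the matrices $F,K_{1},K_{2},K_{3}$ are inherited unchanged from section \ref{sub_stochastic MEs}.

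For the conservation law I would apply the exterior derivative $d$ (acting on the phase variable $Z_{\alpha}$) to the Hamiltonian equation to obtain
\begin{equation*}
F\partial_{t}(dZ_{\alpha})+K_{1}\partial_{x}(dZ_{\alpha})+K_{2}\partial_{y}(dZ_{\alpha})+K_{3}\partial_{z}(dZ_{\alpha})=\nabla^{2}_{Z_{\alpha}}\tilde{S}(Z_{\alpha})\,dZ_{\alpha},
\end{equation*}
and then take the wedge product with $dZ_{\alpha}$ on the left. The right-hand side vanishes because the Hessian $\nabla^{2}_{Z_{\alpha}}\tilde{S}$ is symmetric while the wedge product is antisymmetric. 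On the left, the standard identity for a skew-symmetric matrix $M$, namely $dZ_{\alpha}\wedge M\,\partial_{\mu}dZ_{\alpha}=\partial_{\mu}\bigl(\tfrac{1}{2}dZ_{\alpha}\wedge M\,dZ_{\alpha}\bigr)$, applied with $M=F$ and $M=K_{i}$, converts the four terms into $\partial_{t}\omega+\partial_{x}\kappa_{1}+\partial_{y}\kappa_{2}+\partial_{z}\kappa_{3}$, giving the claimed conservation law.

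There is no real obstacle here; the only point requiring a line of justification is the wedge identity above, which relies on the skew-symmetry of $F$ and $K_{i}$ and the anticommutativity of the wedge product of 1-forms. Everything else is bookkeeping, and the time-dependence of $\tilde{S}$ through the basis functions $m_{p}(t)$ does not interfere because the exterior derivative is taken only in the phase variable $Z_{\alpha}$.
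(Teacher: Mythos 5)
Your proposal is correct and follows exactly the paper's route: the paper's proof consists of introducing the same Hamiltonian $\widetilde{S}(Z_{\alpha})=\sum_{k=1}^{2}\sum_{p=1}^{\infty}S_{k}(Z_{\alpha})\,{\bf 1}_{\{\alpha_{k,p}=1,~|\alpha|=1\}}\,m_{p}(t)$ and then asserting that the conclusions ``can be obtained easily.'' You merely make explicit the standard details the paper omits --- checking that $\nabla_{Z_{\alpha}}\widetilde{S}$ reproduces the right-hand side of the propagator system, and the usual exterior-derivative/wedge argument (symmetric Hessian kills the right side; skew-symmetry of $F,K_{i}$ turns the left side into $\partial_{t}\omega+\partial_{x}\kappa_{1}+\partial_{y}\kappa_{2}+\partial_{z}\kappa_{3}$) --- so there is nothing to correct.
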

\begin{proof}
 By introducing a smooth function of the state variable $Z_{\alpha}$ $$\widetilde{S}(Z_{\alpha})=\sum_{k=1}^2\sum_{p=1}^{\infty} S_{k}(Z_{\alpha}){\bf 1}_{\{\alpha_{k,p}=1,~|\alpha|=1\}}m_p(t),$$
 where $S_{1}(Z_\alpha)=-\sigma_{1}\langle{\bf e},{\bf H}_{\alpha}\rangle$ and $S_{2}(Z_\alpha)=\sigma_{2}\langle{\bf e},{\bf E}_{\alpha}\rangle$, then the  assertions can be obtained easily.
\end{proof}
\hfill $\Box$

For the equations \eqref{3.6}, we have the following energy evolution property.
\begin{cor}
The energy of the solution of \eqref{3.6} satisfies
$$
\int_\Theta\Big(|{\bf E}_\alpha(t,{\bf x})|^2+|{\bf H}_\alpha(t,{\bf x})|^2\Big){\rm d}{\bf x}=\int_\Theta\Big(|{\bf E}_0({\bf x})|^2+|{\bf H}_0({\bf x})|^2\Big){\bf 1}_{\{|\alpha|=0\}}{\rm d}{\bf x}+\Gamma,
$$
with
\begin{equation*}
	\begin{split}
		\Gamma=&2\sigma_{1}\sum_{p=1}^{\infty}{\bf 1}_{\{\alpha_{1,p}=1,~|\alpha|=1\}}\int_0^tm_p(t)\int_\Theta \big\langle{\bf{e}},{\bf E}_\alpha\big\rangle{\rm d}{\bf x}{\rm d}t\\
		&+2\sigma_{2}\sum_{p=1}^{\infty}{\bf 1}_{\{\alpha_{2,p}=1,~|\alpha|=1\}}\int_{0}^tm_p(t)\int_\Theta \big\langle{\bf{e}},{\bf H}_\alpha\big\rangle{\rm d}{\bf x}{\rm d}t.
	\end{split}
\end{equation*}
\end{cor}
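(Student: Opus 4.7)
The plan is to mirror the strategy of Theorem \ref{theorem2.3}, but now in a purely deterministic setting, since equations \eqref{3.6} form a deterministic PDE system for $Z_\alpha = ({\bf H}_\alpha^\top, {\bf E}_\alpha^\top)^\top$. No It\^o calculus is required: the role of the It\^o formula there is replaced here by the standard energy identity for a smooth solution of a linear first-order system with a forcing term.

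First, I would take the $L^2(\Theta)$-inner product of the first equation in \eqref{3.6} with $2{\bf E}_\alpha$ and the second equation with $2{\bf H}_\alpha$, obtaining
\begin{equation*}
\frac{d}{dt}\int_\Theta |{\bf E}_\alpha|^2\,{\rm d}{\bf x} - 2\int_\Theta \langle \nabla\times{\bf H}_\alpha,\,{\bf E}_\alpha\rangle\,{\rm d}{\bf x} = 2\sigma_1\sum_{p=1}^\infty {\bf 1}_{\{\alpha_{1,p}=1,\,|\alpha|=1\}} m_p(t)\int_\Theta \langle{\bf e},{\bf E}_\alpha\rangle\,{\rm d}{\bf x},
\end{equation*}
and similarly for the magnetic part with $+2\int_\Theta \langle \nabla\times{\bf E}_\alpha,\,{\bf H}_\alpha\rangle\,{\rm d}{\bf x}$ on the left-hand side.

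Next, I would apply the Green-type identity for the curl operator, namely
\begin{equation*}
\int_\Theta \langle \nabla\times{\bf H}_\alpha,\,{\bf E}_\alpha\rangle\,{\rm d}{\bf x} - \int_\Theta \langle \nabla\times{\bf E}_\alpha,\,{\bf H}_\alpha\rangle\,{\rm d}{\bf x} = \int_{\partial\Theta} \langle {\bf n}\times{\bf E}_\alpha,\,{\bf H}_\alpha\rangle\,{\rm d}S,
\end{equation*}
and invoke the PEC boundary condition ${\bf n}\times{\bf E}_\alpha|_{\partial\Theta} = {\bf 0}$ to kill the surface integral. Summing the two $L^2$-identities then cancels the curl cross-terms exactly. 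Finally, integrating in time from $0$ to $t$ and substituting the initial condition $({\bf E}_\alpha,{\bf H}_\alpha)(0,{\bf x}) = ({\bf E}_0({\bf x}),{\bf H}_0({\bf x})){\bf 1}_{\{|\alpha|=0\}}$ produces exactly the claimed formula, with the expression $\Gamma$ arising from time-integrating the two source-term contributions.

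The main obstacle is essentially bookkeeping: verifying that the boundary contribution in Green's formula vanishes from the electric boundary condition alone (it does, since $\langle {\bf n}\times{\bf E}_\alpha,\,{\bf H}_\alpha\rangle = 0$ pointwise on $\partial\Theta$), and tracking the signs so that the curl-type terms cancel rather than reinforce. Compared with Theorem \ref{theorem2.3} there is the conceptual simplification that no It\^o correction is present; the quadratic-variation term $3(\sigma_1^2+\sigma_2^2)|\Theta|t$ that appeared there has no analogue for the deterministic propagator, which is why the linear-in-$t$ drift is replaced by the explicit oscillatory integrals inside $\Gamma$.
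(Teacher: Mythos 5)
Your proof is correct and takes essentially the same route as the paper: multiply \eqref{3.6} by ${\bf E}_\alpha$ and ${\bf H}_\alpha$, cancel the curl cross-terms via Green's formula for the curl (equivalently, the divergence of ${\bf E}_\alpha\times{\bf H}_\alpha$) together with the PEC condition ${\bf n}\times{\bf E}_\alpha|_{\partial\Theta}={\bf 0}$, then integrate in time using the initial data \eqref{3.5}. One cosmetic remark: the boundary identity should read $\int_\Theta\big(\langle\nabla\times{\bf E}_\alpha,{\bf H}_\alpha\rangle-\langle\nabla\times{\bf H}_\alpha,{\bf E}_\alpha\rangle\big)\,{\rm d}{\bf x}=\int_{\partial\Theta}\langle{\bf n}\times{\bf E}_\alpha,{\bf H}_\alpha\rangle\,{\rm d}S$, i.e., with the sign opposite to the one you wrote, but this is immaterial since the surface integral vanishes either way.
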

\begin{proof}
By Green’s formula and \eqref{3.6} it gets
\begin{equation}\label{3.7}
	\begin{split}
		\int_\Theta\Big(\big\langle\partial_t{\bf E}_\alpha,{\bf E}_\alpha\big\rangle+\big\langle\partial_t{\bf H}_\alpha,{\bf H}_\alpha\big\rangle\Big){\rm d}{\bf x}=&\int_\Theta\big\langle\nabla\times{\bf H}_\alpha+\sigma_{1}{\bf{e}}\sum_{p=1}^{\infty}{\bf 1}_{\{\alpha_{1,p}=1,~|\alpha|=1\}}m_p(t),{\bf E}_\alpha\big\rangle{\rm d}{\bf x}\\
		&-\int_\Theta\big\langle\nabla\times{\bf E}_\alpha-\sigma_{2}{\bf{e}}\sum_{p=1}^{\infty}{\bf 1}_{\{\alpha_{2,p}=1,~|\alpha|=1\}}m_p(t),{\bf H}_\alpha\big\rangle{\rm d}{\bf x}\\
		=&-\int_\Theta\nabla\cdot\big({\bf E}_\alpha\times{\bf H}_\alpha\big){\rm d}{\bf x}+A,
	\end{split}	
\end{equation}
with 
$$A=\sigma_{1}\sum_{p=1}^{\infty}{\bf 1}_{\{\alpha_{1,p}=1,~|\alpha|=1\}}m_p(t)\int_\Theta\big\langle{\bf{e}},{\bf E}_\alpha\big\rangle{\rm d}{\bf x}+\sigma_{2}\sum_{p=1}^{\infty}{\bf 1}_{\{\alpha_{2,p}=1,~|\alpha|=1\}}m_p(t)\int_\Theta\big\langle{\bf{e}},{\bf H}_\alpha\big\rangle{\rm d}{\bf x}.$$
It following from the PEC boundary condition. We finally complete the proof.
\end{proof}
\hfill $\Box$

Now the following part is dedicated to presenting the WCE algorithm for the stochastic Maxwell equations \eqref{2.1}. In fact, we need to truncate the equation \eqref{3.6} for the numerical implementation. We define the order of multi-index 
$$d(\alpha):=\max\big\{p\geq1:~\alpha_{k,p}>0~{\rm for ~some}~k>1\big\},$$
and the truncated set of multi-indices be denoted by
\begin{align*}
	\mathcal{J}_{N,I}=\big\{\alpha\in\mathcal{J}:~|\alpha|\leq N,~d(\alpha)\leq I\big\},
\end{align*}
where $N$ is the highest Hermite polynomial order and $I$ is the maximum number of Gaussian random variables for each Wiener process. Then the truncated WCE of stochastic Maxwell equations is given by
\begin{equation}\label{WCE_trun}
	{\bf E}_{N,I}(t,{\bf x})=\sum_{\alpha\in\mathcal{J}_{N,I}}{\bf E}_{\alpha}(t,{\bf x})T_{\alpha},\quad {\bf H}_{N,I}(t,{\bf x})=\sum_{\alpha\in\mathcal{J}_{N,I}}{\bf H}_{\alpha}(t,{\bf x})T_{\alpha}.
\end{equation}

Thus, the truncated expansion \eqref{WCE_trun} together with \eqref{3.6} gives us a constructive approximation of the solution to the stochastic Maxwell equations \eqref{2.1}. 
\begin{algorithm}\label{algorithm 1}
Step 1. Choose a complete orthonormal system $\{m_p(t)\}_{p\geq1}$; take the truncated parameters $N$ and $I$ to determine the size of the multi-index set $\mathcal{J}_{N,I}$.

Step 2. Introduce the temporal step size $\Delta t$ and the spatial mesh grid size $\Delta x$, $\Delta y$, $\Delta z$; Choose an appropriate fully discrete scheme to approximation \eqref{3.6} for $\alpha\in\mathcal{J}_{N,I}$ in domain $[0,T]\times\Theta$ and obtain the WCE coefficients ${\bf E}_{\alpha}$, ${\bf H}_{\alpha}$.

Step 3. According to \eqref{WCE_trun}, obtain the approximate solution of \eqref{2.1}.

Step 4. According to Theorem \ref{WCE1} and Theorem \ref{WCE2}, compute the statistical moments of the solution to \eqref{2.1},
\begin{equation*}
	\begin{split}
	&\mathcal{E}\big[{\bf E}(t,{\bf x})\big]={\bf E}_{\{|\alpha|=0\}}(t,{\bf x}),\quad \mathcal{E}\big[{\bf E}^{2}(t,{\bf x})\big]=\sum_{\alpha\in\mathcal{J}_{N,I}}\big|{\bf E}_{\alpha}(t,{\bf x})\big|^{2},\\
    &\mathcal{E}\big[{\bf E}^{3}(t,{\bf x})\big]=\sum_{\alpha\in\mathcal{J}_{N,I}}\big[\sum_{\rho\in\mathcal{J}_{N,I}}\sum_{0\le \beta \le \alpha}G(\alpha,\beta,\rho){\bf E}_{\alpha-\beta+\rho}(t,{\bf x}){\bf E}_{\beta+\rho}(t,{\bf x})\big]{\bf E}_{\alpha}(t,{\bf x}),\\
    &\mathcal{E}\big[{\bf E}^{4}(t,{\bf x})\big]=\sum_{\alpha\in\mathcal{J}_{N,I}}\big[\sum_{\rho\in\mathcal{J}_{N,I}}\sum_{0\le \beta \le \alpha}G(\alpha,\beta,\rho){\bf E}_{\alpha-\beta+\rho}(t,{\bf x}){\bf E}_{\beta+\rho}(t,{\bf x})\big]^{2},\\    
	&\mathcal{E}\big[{\bf H}(t,{\bf x})\big]={\bf H}_{\{|\alpha|=0\}}(t,{\bf x}),\quad \mathcal{E}\big[{\bf H}^{2}(t,{\bf x})\big]=\sum_{\alpha\in\mathcal{J}_{N,I}}\big|{\bf H}_{\alpha}(t,{\bf x})\big|^{2},\\
    &\mathcal{E}\big[{\bf H}^{3}(t,{\bf x})\big]=\sum_{\alpha\in\mathcal{J}_{N,I}}\big[\sum_{\rho\in\mathcal{J}_{N,I}}\sum_{0\le \beta \le \alpha}G(\alpha,\beta,\rho){\bf H}_{\alpha-\beta+\rho}(t,{\bf x}){\bf H}_{\beta+\rho}(t,{\bf x})\big]{\bf H}_{\alpha}(t,{\bf x}),\\
    &\mathcal{E}\big[{\bf H}^{4}(t,{\bf x})\big]=\sum_{\alpha\in\mathcal{J}_{N,I}}\big[\sum_{\rho\in\mathcal{J}_{N,I}}\sum_{0\le \beta \le \alpha}G(\alpha,\beta,\rho){\bf H}_{\alpha-\beta+\rho}(t,{\bf x}){\bf H}_{\beta+\rho}(t,{\bf x})\big]^{2}.
	\end{split}
\end{equation*}
\end{algorithm}
\section{Numerical results}
This section presents a comprehensive numerical investigation of the one-dimensional (1D), two-dimensional (2D), and three-dimensional (3D) stochastic Maxwell equations driven by Wiener processes. Our primary objective is to compute the statistical moments of the solutions and the average energy associated with the stochastic Maxwell equations. The numerical results obtained using the WCE algorithm are systematically compared with those generated by the MC method. 

Firstly, in order to describe the numerical experiments, we introduce the general difference operators employed in the spatial and temporal discretization. Let $\Delta x$, $\Delta y$ and $\Delta z$ denote the spatial mesh grid size, and $\Delta t$ represents the temporal step size. The $\left[0,T\right]\times \left[0,X\right]\times \left[0,Y\right]\times \left[0,Z\right]$ is partitioned by parallel lines, where 
\begin{equation*}
\begin{split}
&t_n=n\Delta t,~n=0,1,\cdots,N,\quad x_i=i\Delta x,~i=0,1,\cdots,I,\\
&y_j=j\Delta y,~j=0,1,\cdots,J,\quad z_k=k\Delta z,~k=0,1,\cdots,K.
\end{split}
\end{equation*}
The grid function $U^{n,i,j,k}$ is the numerical approximation to the exact solution $U(t_n,x_i,y_j,z_k)$ at these discrete points. The general difference operators  in time and space are defined as follows
\begin{equation*}
\begin{split}
&\delta_tU^{n,i,j,k}=\dfrac{U^{n+1,i,j,k}-U^{n,i,j,k}}{\Delta t}, \quad \delta_xU^{n,i,j,k}=\dfrac{U^{n,i+1,j,k}-U^{n,i-1,j,k}}{2\Delta x},\\ 
&\delta_yU^{n,i,j,k}=\dfrac{U^{n,i,j+1,k}-U^{n,i,j-1,k}}{2\Delta y}, \quad \delta_zU^{n,i,j,k}=\dfrac{U^{n,i,j,k+1}-U^{n,i,j,k-1}}{2\Delta z}.
\end{split}
\end{equation*}

Subsequently, in order to compare the accuracy between the WCE algorithm and MC method, we use the Frobenius norm to compute the relative error of the two methods and it takes the form
\begin{equation*}
err(u)=\dfrac{\left \| u_{WCE}-u_{MC} \right \|_{F} }{\left \| u_{MC} \right \|_{F} },
\end{equation*}
where $u_{WCE}$ and $u_{MC}$ denote the numerical solutions computed via the WCE  and MC, respectively.
\subsection{1-D stochastic Maxwell equations}
In this subsection, we mainly consider the following 1-D stochastic Maxwell equations with additive noise
\begin{equation}\label{4.1}
\left\{
\begin{split}
\partial_{t}E_{1}&=-\partial_{x}H_{1}-\sigma\dot{W}(t),~~(t,x)\in[0,T]\times\Theta,\\[2mm]
\partial_{t}H_{1}&=-\partial_{x}E_{1}+\sigma\dot{W}(t),~~(t,x)\in[0,T]\times\Theta,
\end{split}\right.
\end{equation}
where $T=1$,  $\Theta =\left [0, 2\pi \right]$ and $\dot{W}(t)$ denotes the formal derivative of the Wiener process $W(t)$. Meanwhile, the equations (\ref{4.1}) satisfies periodic boundary conditions and the initial conditions are specified by
\begin{equation*}
\begin{split}
E_{1}(0,x)=\sin{x}+\cos{x},~~H_{1}(0,x)=\sin{x}-\cos{x}.
\end{split}
\end{equation*}

 Next, we will derive the discretized formulation of WCE algorithm for equations (\ref{4.1}). We assume that WCE truncated set is $\mathcal{J}_{N_1,I_1}$ and it is the set of multi-indices $\alpha=\{\alpha_{p}\}$, then discretized formulation is as follows
\begin{equation*}
\begin{split}
\delta_{t}E_{1,\alpha}^{n,i}&=-\delta_{x}H_{1,\alpha}^{n,i}-\sigma \sum_{p=1}^{N_1}I_{\{\alpha_{p}=1,~|\alpha|=1\}}m_{p}^{n},\\
\delta_{t}H_{1,\alpha}^{n,i}&=-\delta_{x}E_{1,\alpha}^{n,i}+\sigma \sum_{p=1}^{N_1}I_{\{\alpha_{p}=1,~|\alpha|=1\}}m_{p}^{n},\\
\end{split}
\end{equation*}
where $m_{p}^{n}=m_{p}(t_n)$ and $E_{1,\alpha}$, $H_{1,\alpha}$ denote the WCE coefficients of $E_1$, $H_1$, respectively. According to the Algorithm \ref{algorithm 1}, we derive the third and fourth order statistical moments of equations
\begin{equation*}
\begin{split}
\mathcal{E}\left(E_1^{n,i}\right)^3&=\sum_{\alpha\in \mathcal{J}_{N_1,I_1}}\big[ \sum_{\rho\in \mathcal{J}_{N_1,I_1}}
\sum_{0\le \beta\le \alpha}G(\alpha,\beta,\rho)
 E^{n,i}_{1,\alpha-\beta+\rho}E^{n,i}_{1,\beta+\rho}\big]E^{n,i}_{1,\alpha},\\
\mathcal{E}\left(E_1^{n,i}\right)^4&=\sum_{\alpha\in \mathcal{J}_{N_1,I_1}}\big[ \sum_{\rho\in \mathcal{J}_{N_1,I_1}}
\sum_{0\le \beta\le \alpha}G(\alpha,\beta,\rho)
 E^{n,i}_{1,\alpha-\beta+\rho}E^{n,i}_{1,\beta+\rho}\big]^2.
\end{split}
\end{equation*} 
Furthermore, the discrete averaged energy $\mathcal{E}\left[\Phi(t_n)\right]$ is defined as 
\begin{equation*}
\mathcal{E}\left[\Phi(t_n)\right]=\sum_{\alpha\in \mathcal{J}_{N_1,I_1}}\sum_{i=0}^{I}\left[\left(E_{1,\alpha}^{n,i}\right)^2+\left(H_{1,\alpha}^{n,i}\right)^2 \right]\Delta x,~~n=0,1,\cdots,N.
\end{equation*}

In the first experiment, the WCE truncated set is $\mathcal{J}_{20,2}$ and the size of noise is fixed at $\sigma=1$. The spatial mesh grid size is $\Delta x= 2\pi/200$, and the temporal step size is $\Delta t =1/1000$. We compare the numerical results of WCE algorithm with those of MC method with $20000$ realizations. Figure~\ref{pic1:4.1} and Figure~\ref{pic1:4.2} demonstrate the third and fourth order statistical moments of $E_1$ and $H_1$ at $T=1$. We observe the two results almost coincide. Table~\ref{table1} displays the relative errors of the two methods for various statistical moments. The results indicate there is comparable accuracy between the WCE algorithm and the MC methods. However, the computation times of the WCE algorithm and MC method  are $6.867$s, $502.528$s, respectively. It is obvious that the WCE algorithm havs superior efficiency than the MC method.

\begin{figure}[H]
\begin{center}
  \includegraphics[height=4.2cm,width=8cm]{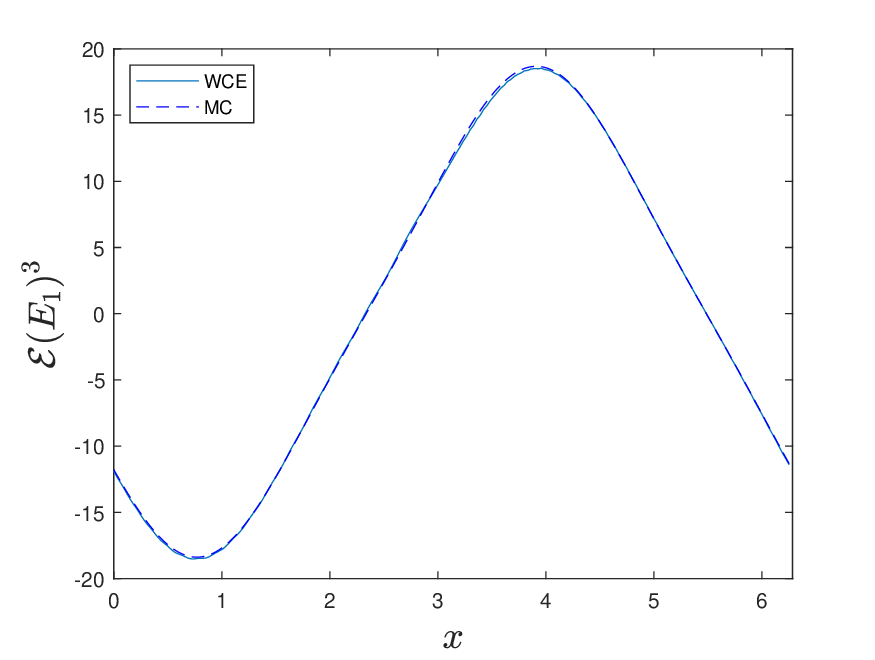}
  \includegraphics[height=4.2cm,width=8cm]{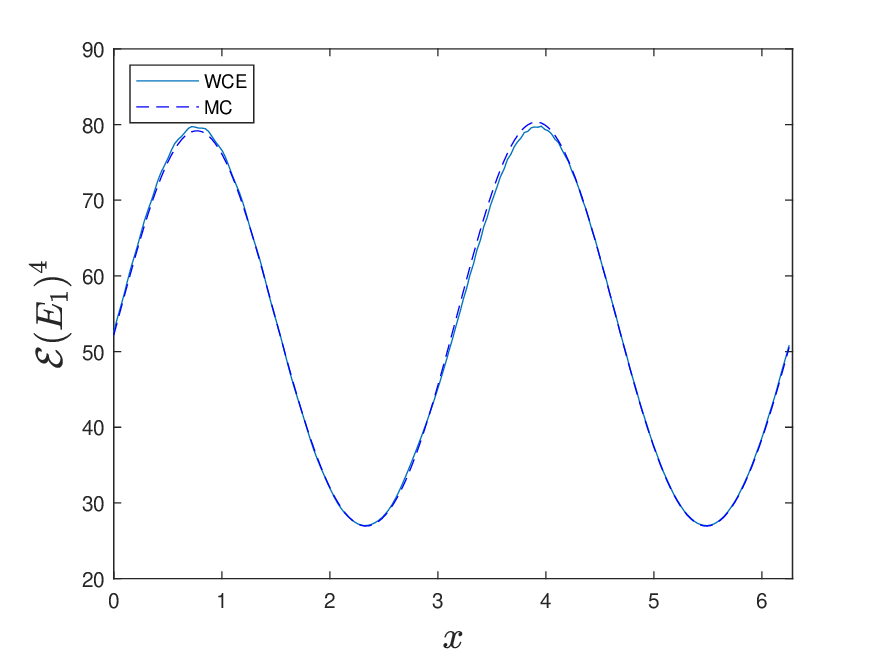}
  \caption{Third order statistical moments of $E_1$ (left) and fourth order statistical moments of $E_1$ (right) for $\sigma=1$ and $T=1$.}\label{pic1:4.1}
  \end{center}
\end{figure}
\begin{figure}[H]
\begin{center}
  \includegraphics[height=4.2cm,width=8cm]{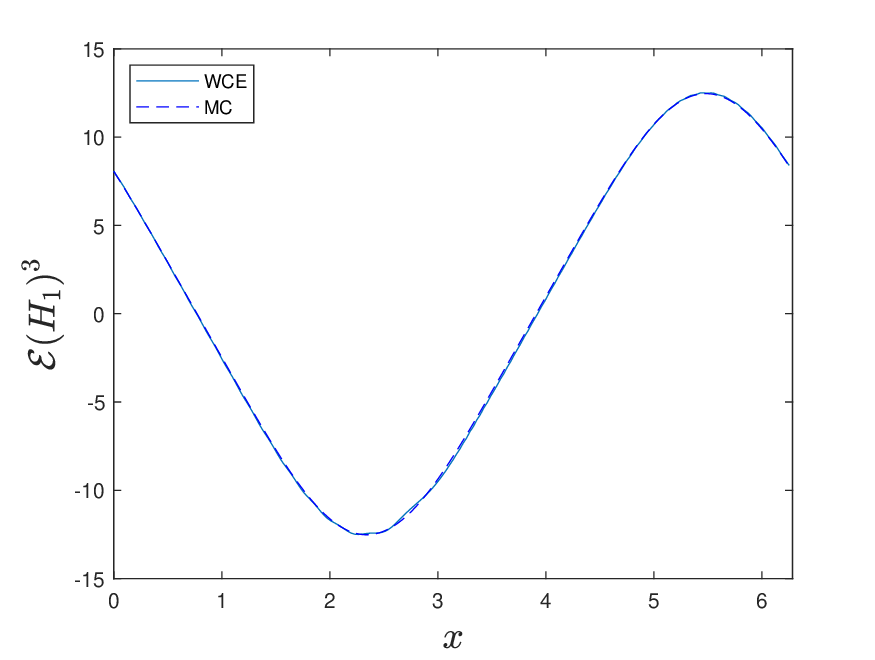}
  \includegraphics[height=4.2cm,width=8cm]{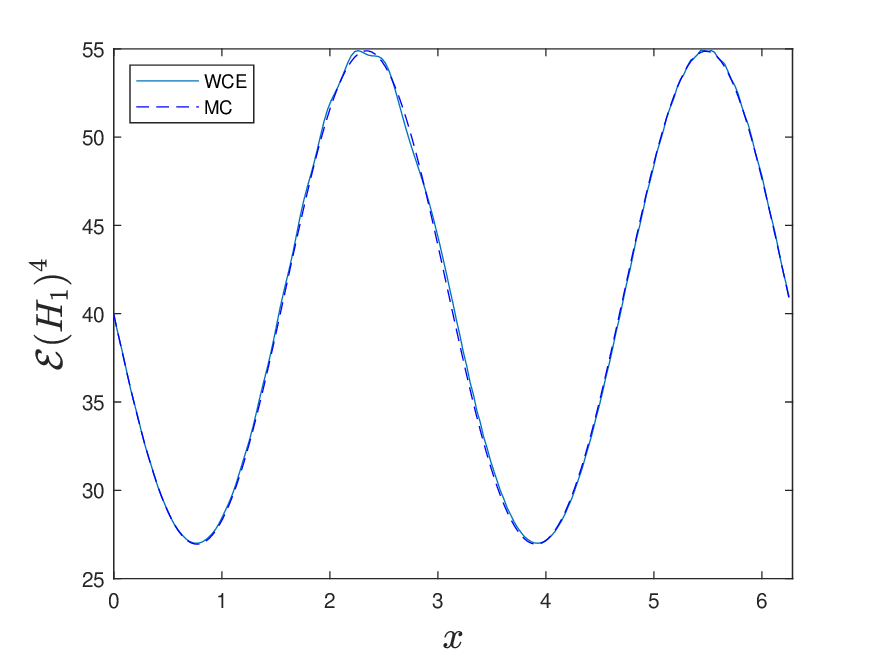}
  \caption{Third order statistical moments of $H_1$ (left) and fourth order statistical moments of $H_1$ (right) for $\sigma=1$ and $T=1$.}\label{pic1:4.2}
  \end{center}
\end{figure}
\begin{table}[H]
\centering
\fontsize{10.5pt}{25pt}
\begin{tabular}{cccccc} 
\toprule
 $u$ & $err\left[\mathcal{E}(u)\right]$& $ err\left[\mathcal{E}(u)^2\right]$ & $err\left[\mathcal{E}(u)^3\right]$ & $err\left[\mathcal{E}(u)^4\right]$\\
\midrule
$u=E_1$ &0.0096  & 0.0063 &0.0106  &0.0083\\
$u=H_1$ &0.0088  & 0.0054 &0.0105  &0.0055 \\
 \bottomrule
\end{tabular}
\caption{Relative errors of the WCE algorithm and MC method}
\label{table1}
\end{table}
 
Subsequently, we consider the temporal evolution of the average energy for varying sizes of noise $\sigma$ by $\sigma=0$, $\sigma=0.1$, $\sigma=0.5$ and $\sigma=1$. Figure~\ref{pic1:4.3} presents the average energy growth curves of the WCE algorithm, the MC method and the exact solution derived from Theorem~\ref{theorem2.3}. It is evident that the results obtained via the WCE algorithm demonstrate remarkable agreement with the exact solution. While the MC method also coincides with the exact solution, the sampling variability causes the deviations.
\begin{figure}[H]
\begin{center}
  \includegraphics[height=4.4cm,width=8cm]{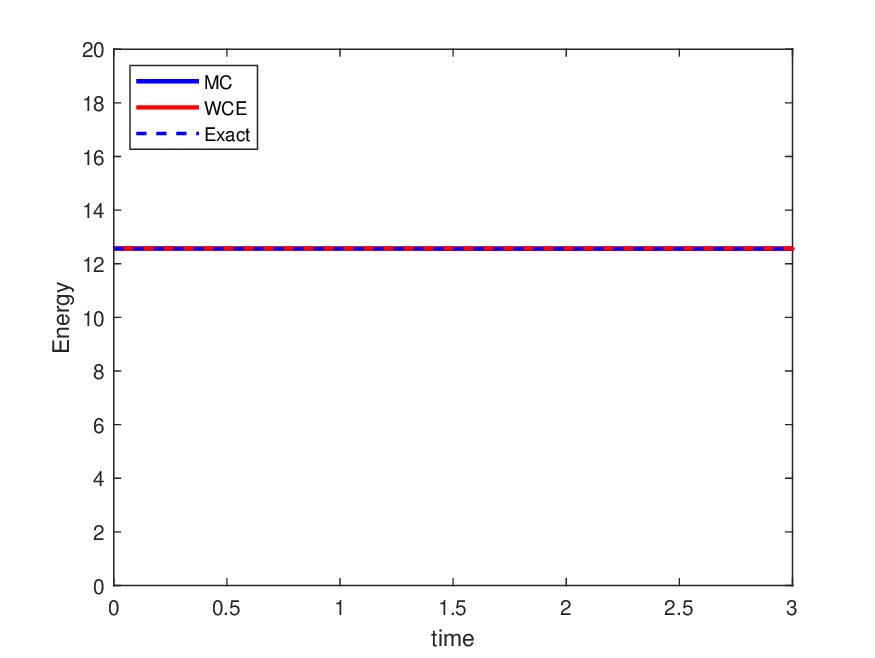}
  \includegraphics[height=4.4cm,width=8cm]{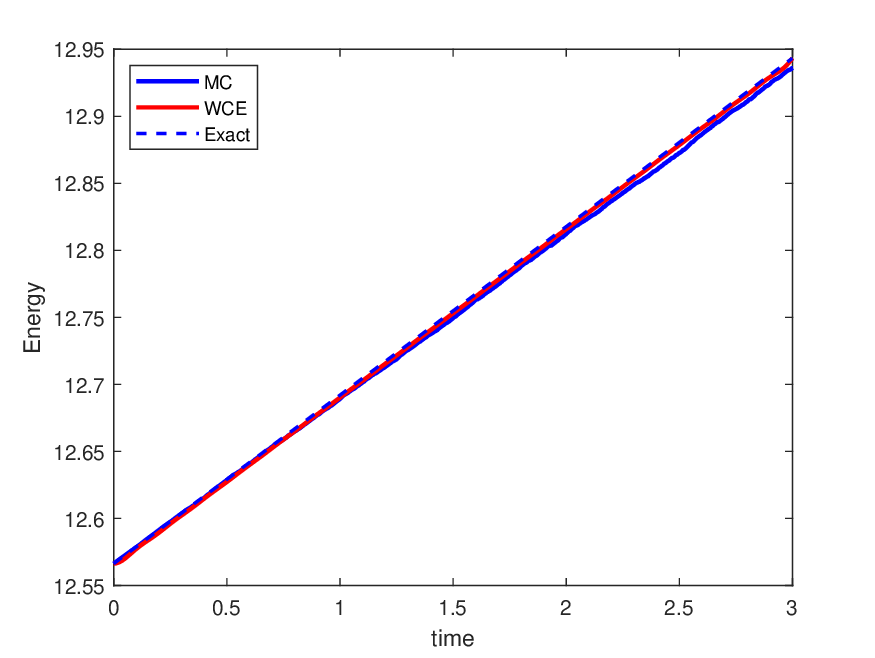}
  \includegraphics[height=4.4cm,width=8cm]{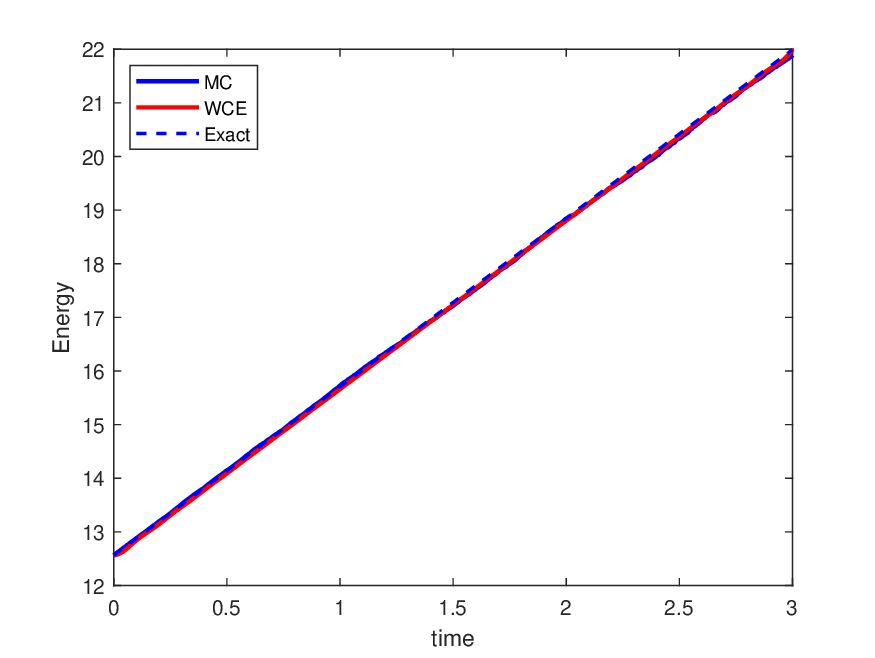}
  \includegraphics[height=4.4cm,width=8cm]{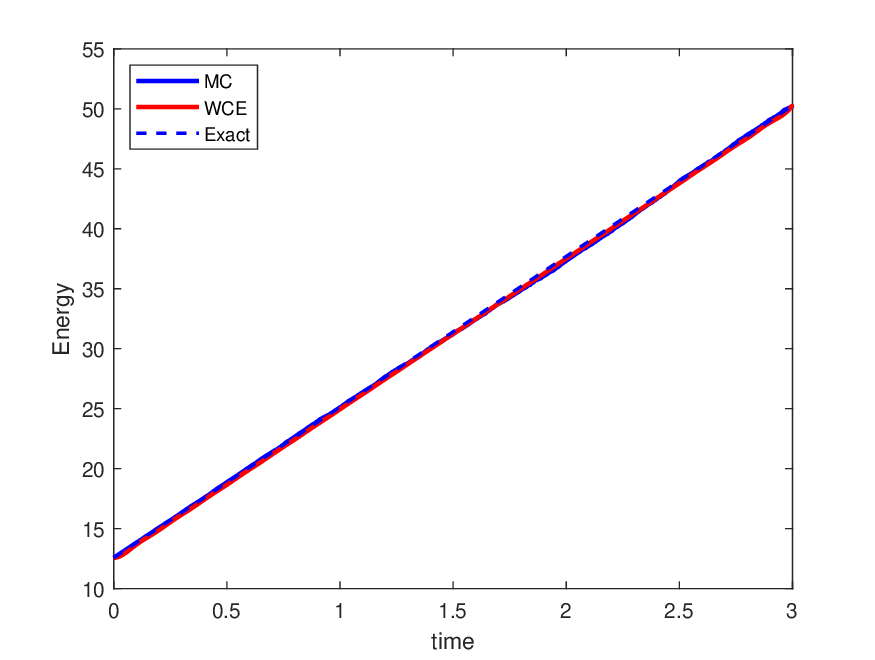}
  \caption{Averaged energy by WCE and MC for different sizes of noise $\sigma = 0$ (top left), $\sigma = 0.1$ (top right), $\sigma = 0.5$ (below left) and $\sigma = 1$ (below right).}\label{pic1:4.3}
  \end{center}
\end{figure} 
\subsection{2-D stochastic Maxwell equations}
In this subsection, we mainly consider the following 2-D stochastic Maxwell equations
\begin{equation}\label{4.2}
\left\{
\begin{split}
\partial_{t}E_{3}&=\partial_{x}H_{2}-\partial_{y}H_{1}+\sigma\dot{W}(t),~~(t,{\bf x})\in[0,T]\times\Theta,\\[2mm]
\partial_{t}H_{1}&=-\partial_{y}E_{3}+\sigma\dot{W}(t),~~(t,{\bf x})\in[0,T]\times\Theta,\\[2mm]
\partial_{t}H_{2}&=\partial_{x}E_{3}+\sigma\dot{W}(t),~~(t,{\bf x})\in[0,T]\times\Theta,
\end{split}\right.
\end{equation}
where $T=1$, ${\bf x}=(x,y)$ and $\Theta =\left [0, 2\pi \right]\times \left[0, 2\pi\right]$. The equations satisfies the periodic boundary conditions and the initial conditions being
\begin{equation*}
\begin{split}
E_{3}(0,x,y)=\sin{x}-\cos{y},~~H_{1}(0,x,y)=\cos{y},~~H_{2}(0,x,y)=\sin{x}.
\end{split}
\end{equation*}

Analogous to the 1-D case, the WCE truncated set is $\mathcal{J}_{N_1,I_1}$. The discretized WCE formulation for the equations (\ref{4.2}) is given by
\begin{equation*}
\begin{split}
\delta_{t}E_{3,\alpha}^{n,i,j}&=\delta_{x}H_{2,\alpha}^{n,i,j}-\delta_{y}H_{1,\alpha}^{n,i,j}+\sigma \sum_{p=1}^{N_1}I_{\{\alpha_{p}=1,~|\alpha|=1\}}m_{p}^{n},\\
\delta_{t}H_{1,\alpha}^{n,i,j}&=-\delta_{x}E_{3,\alpha}^{n,i,j}+\sigma \sum_{p=1}^{N_1}I_{\{\alpha_{p}=1,~|\alpha|=1\}}m_{p}^{n},\\
\delta_{t}H_{2,\alpha}^{n,i,j}&=\delta_{x}E_{3,\alpha}^{n,i,j}+\sigma \sum_{p=1}^{N_1}I_{\{\alpha_{p}=1,~|\alpha|=1\}}m_{p}^{n}.\\
\end{split}
\end{equation*}
Focusing on the electric field component $E_3$ as an example, its third and fourth order statistical moments are computed according to the following expressions
\begin{equation*}
\begin{split}
\mathcal{E}\left(E_3^{n,i,j}\right)^3&=\sum_{\alpha\in \mathcal{J}_{N_1,I_1}}\big[ \sum_{\rho\in \mathcal{J}_{N_1,I_1}}
\sum_{0\le \beta\le \alpha}G(\alpha,\beta,\rho)
 E^{n,i,j}_{3,\alpha-\beta+\rho}E^{n,i,j}_{3,\beta+\rho}\big]E^{n,i,j}_{3,\alpha},\\
\mathcal{E}\left(E_3^{n,i,j}\right)^4&=\sum_{\alpha\in \mathcal{J}_{N_1,I_1}}\big[\sum_{\rho\in \mathcal{J}_{N_1,I_1}}
\sum_{0\le \beta\le \alpha}G(\alpha,\beta,\rho)
 E^{n,i,j}_{3,\alpha-\beta+\rho}E^{n,i,j}_{3,\beta+\rho}\big]^2.
\end{split}
\end{equation*} 
The discrete averaged energy $\mathcal{E}\left[\Phi(t_n)\right]$ satisfies 
\begin{equation*}
\mathcal{E}\left[\Phi(t_n)\right]=\sum_{\alpha\in \mathcal{J}_{N_1,I_1}}\sum_{i=0}^{I}\sum_{j=0}^{J}\left[\left(E_{3,\alpha}^{n,i,j}\right)^2+\left(H_{1,\alpha}^{n,i,j}\right)^2+\left(H_{2,\alpha}^{n,i,j}\right)^2  \right]\Delta x\Delta y,~~n=0,1,\cdots,N.
\end{equation*}

For the numerical simulations of the 2-D equations, the WCE truncated set is  $\mathcal{J}_{20, 3}$ and the size of noise is set to $\sigma=1$. We take the spatial mesh grid size $\Delta x=\Delta y=2\pi/60$, and the temporal step size is $\Delta t=1/1000$. The  MC method  is performed with $10000$ realizations for comparison. Figure~\ref{pic2:4.1} and Figure~\ref{pic2:4.2} demonstrate the 3D plot of third and fourth order statistical moments of $E_3$. Table~\ref{table2} demonstrates the relative errors between the two algorithms for various statistical moments. The results demonstrate that the WCE algorithm achieves accuracy comparable to that of the  MC method. Then we consider the computation times of two algorithm, the computation times of the WCE algorithm is $301.896$s and the computation times of  MC method is $1299.118$s. It is obvious that the WCE algorithm also has high computational efficiency in the 2-D equations.
\begin{figure}[H]
\begin{center}
  \includegraphics[height=5cm,width=6.5cm]{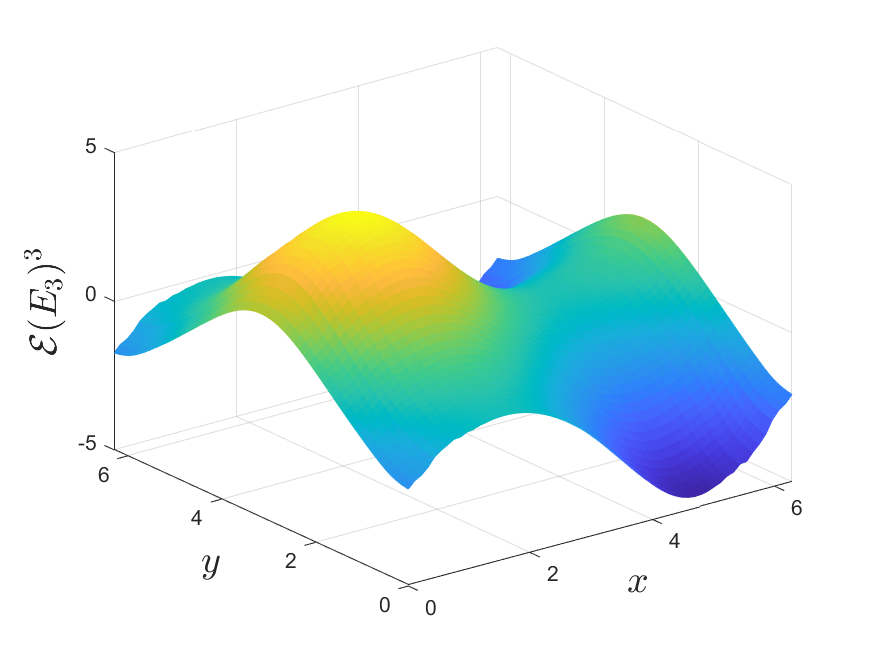}
  \includegraphics[height=5cm,width=6.5cm]{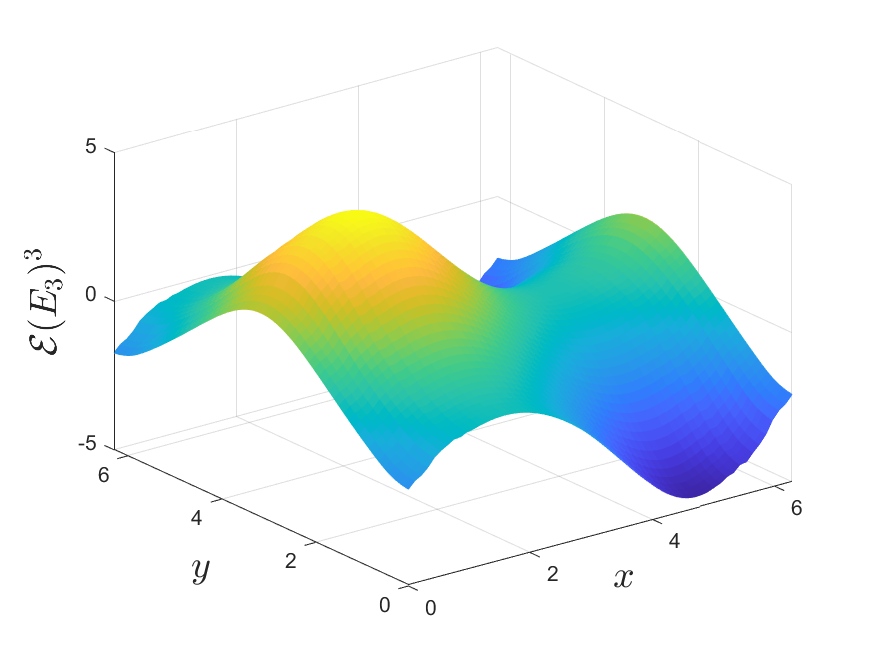}
  \caption{Third order statistical moments of of $E_3$ by MC (left) and WCE (right) for $\sigma=1$ and $T=1$.}\label{pic2:4.1}
  \end{center}
\end{figure}
\begin{figure}[H]
\begin{center}
  \includegraphics[height=5cm,width=6.5cm]{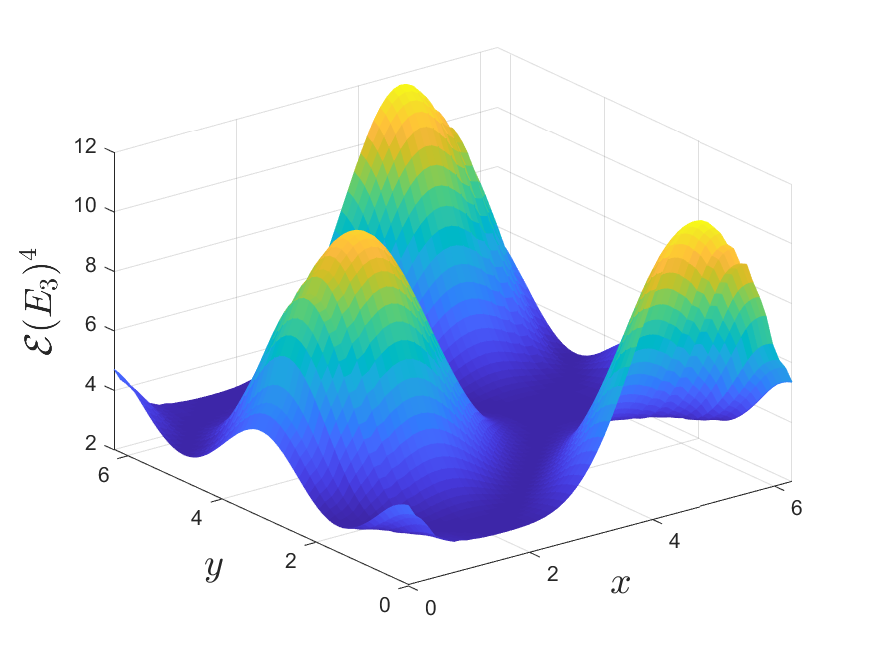}
  \includegraphics[height=5cm,width=6.5cm]{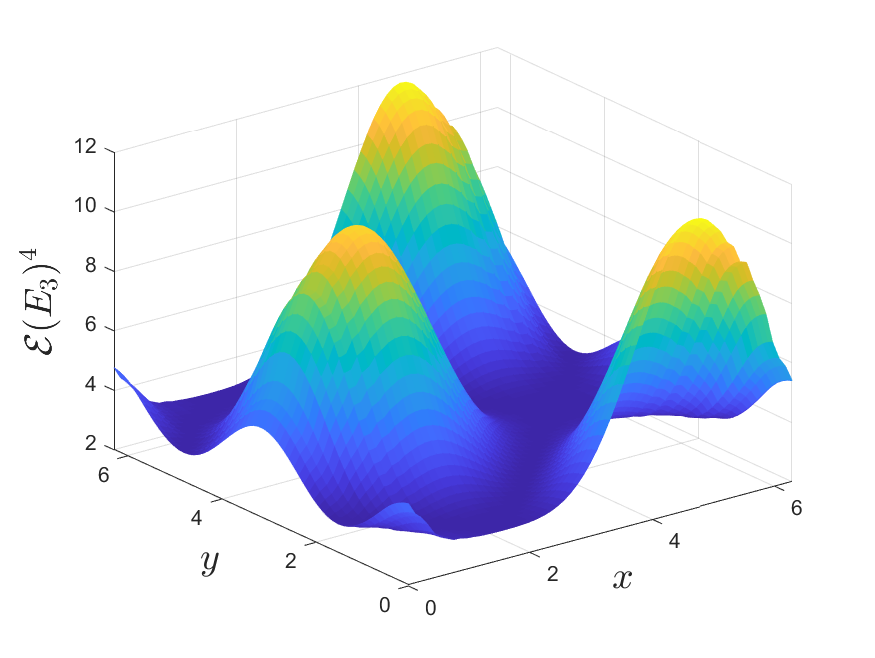}
  \caption{Fourth order statistical moments of of $E_3$ by MC (left) and WCE (right) for $\sigma=1$ and $T=1$.}\label{pic2:4.2}
  \end{center}
\end{figure}
\begin{table}[H]
\centering
\fontsize{10.5pt}{25pt}
 \begin{tabular}{cccccc} 
\toprule
 $u$ & $err\left[\mathcal{E}(u)\right]$& $ err\left[\mathcal{E}(u)^2\right]$ & $err\left[\mathcal{E}(u)^3\right]$ & $err\left[\mathcal{E}(u)^4\right]$\\
\midrule
$u=E_3$ & 0.0102 & 0.0165 & 0.0183  &0.0415\\
 \bottomrule
\end{tabular}
\caption{Relative errors of the WCE algorithm and  MC method}
\label{table2}
\end{table}

‌Basing on the previous numerical conditions, we now compute the temporal evolution of the average energy for different $\sigma$. Figure~\ref{pic2:4.3} presents the average energy growth curves. Comparing the results from the WCE algorithm and  MC method with the exact solution, it is obvious that the WCE results almost coincide with those of the exact solution. While the  MC method satisfies the linear growth, we still observe the minor deviations between  MC method and the exact solution.
\begin{figure}[H]
\begin{center}
  \includegraphics[height=4.4cm,width=8cm]{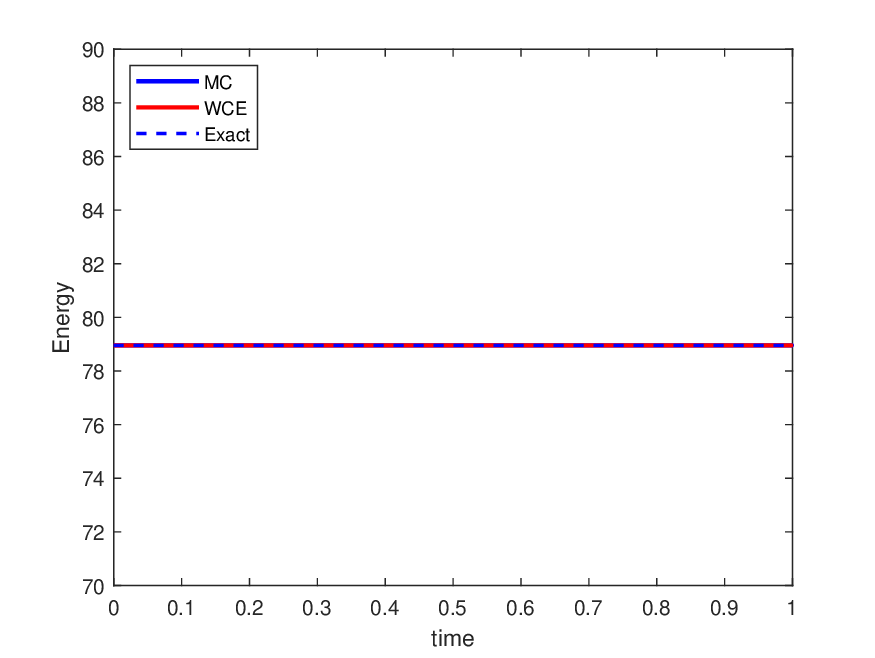}
  \includegraphics[height=4.4cm,width=8cm]{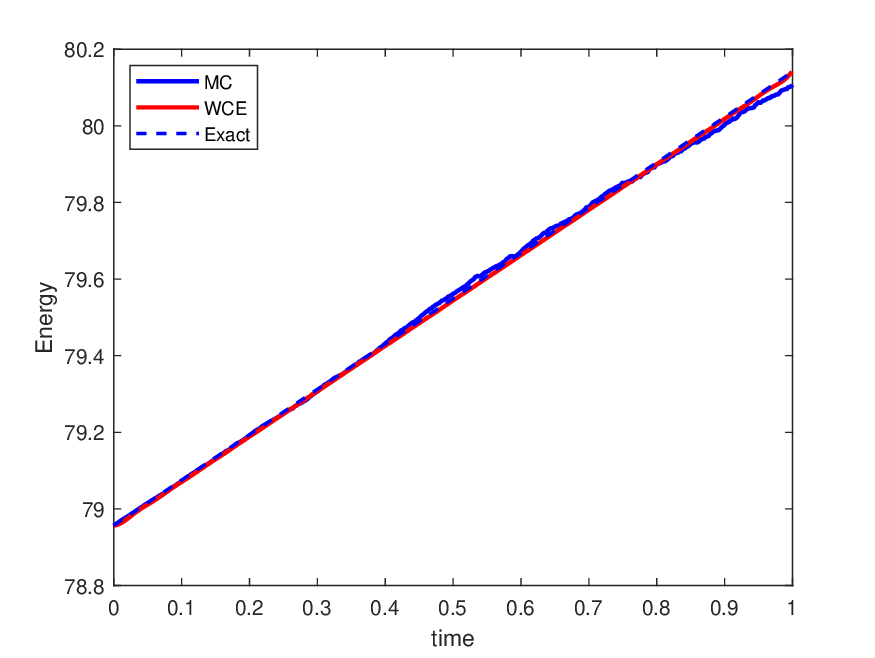}
  \includegraphics[height=4.4cm,width=8cm]{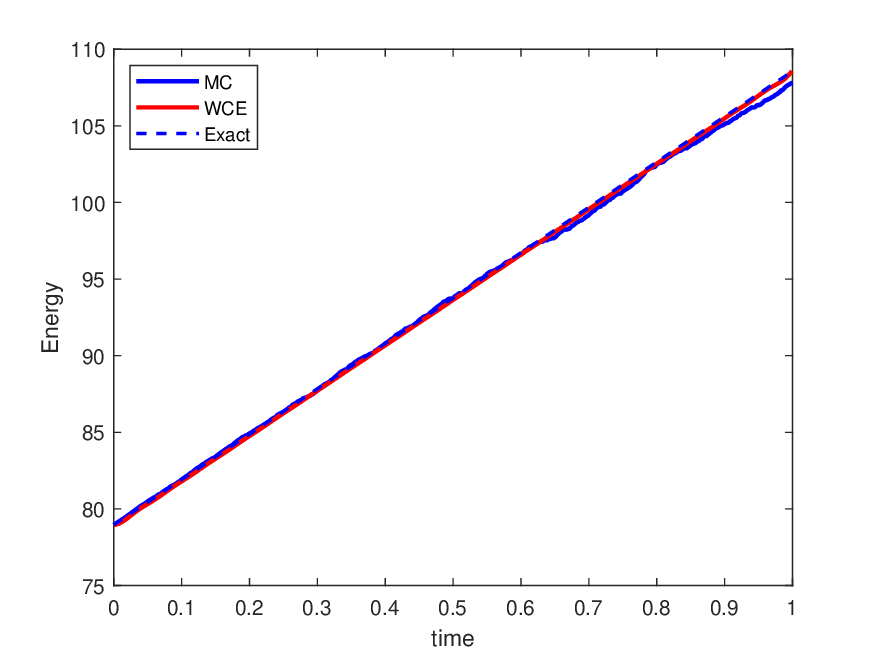}
  \includegraphics[height=4.4cm,width=8cm]{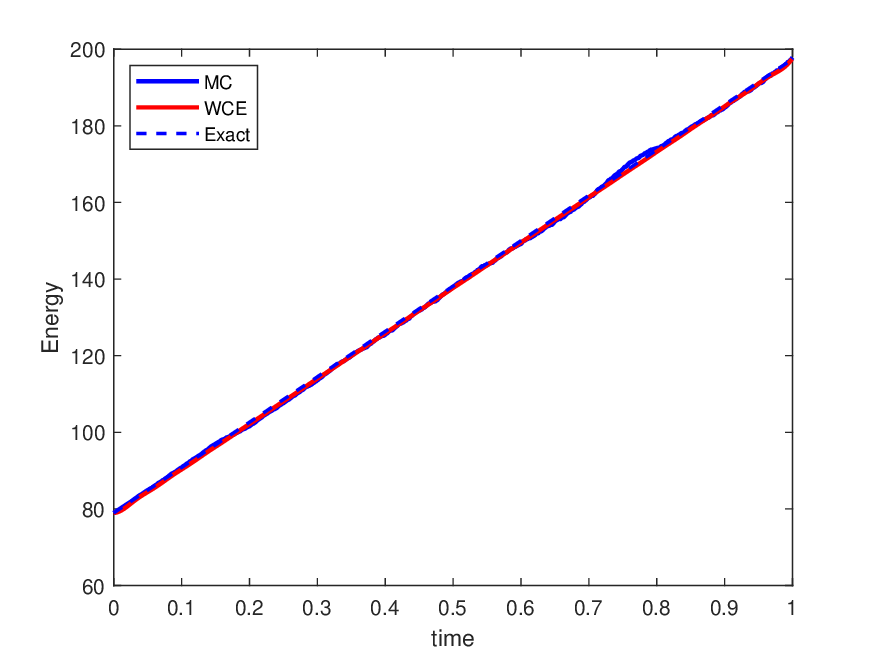}
  \caption{Averaged energy by WCE and MC for different sizes of noise $\sigma = 0$ (top left), $\sigma = 0.1$ (top right), $\sigma = 0.5$ (below left) and $\sigma = 1$ (below right).}\label{pic2:4.3}
  \end{center}
\end{figure}
\subsection{3-D stochastic Maxwell equations}
Finally, we consider the following 3-D stochastic Maxwell equations with additive noise
\begin{equation}\label{4.3}
\left \{ \begin{split}
\partial_{t}{\bf E}&=~~\nabla \times {\bf H}+\sigma{\bf e}\dot{W},~~(t,{\bf x}) \in \left[0,T \right]\times \Theta,\\[2mm]  
\partial_{t}{\bf H}&=-\nabla \times {\bf E}+\sigma{\bf e}\dot{W},~~(t,{\bf x}) \in \left[0,T \right]\times \Theta,
\end{split}\right.
\end{equation} 
where $T=1$, $\Theta =\left[0 ,1 \right]^3$,
${\bf E}=(E_1,E_2,E_3)^\top$ and ${\bf H}=(H_1,H_2,H_3)^\top$. The equations satisfies the periodic boundary conditions and the initial conditions being
\begin{equation*}
\begin{split}
E_{1}(0,x,y,z)&=\frac{5}{\sqrt{14}}\cos(\pi x)\sin(2\pi y)\sin(-3\pi z),H_{1}(0,x,y,z)=\sin(\pi x)\cos(2\pi y)\cos(-3\pi z),\\
E_{2}(0,x,y,z)&=\frac{-4}{\sqrt{14}}\sin(\pi x)\cos(2\pi y)\sin(-3\pi z),H_{2}(0,x,y,z)=\cos(\pi x)\sin(2\pi y)\cos(-3\pi z),\\
E_{3}(0,x,y,z)&=\frac{-1}{\sqrt{14}}\sin(\pi x)\sin(2\pi y)\sin(-3\pi z),H_{3}(0,x,y,z)=\cos(\pi x)\cos(2\pi y)\sin(-3\pi z).
\end{split}
\end{equation*} 

Analogous to the lower-dimensional cases, the discretized WCE algorithm for the equations (\ref{4.3}) expands to the following expressions for each field component
\begin{equation*}
\begin{split}
\delta_{t}E_{1,\alpha}^{n,i,j,k}&=\delta_{y}H_{3,\alpha}^{n,i,j,k}-\delta_{z}H_{2,\alpha}^{n,i,j,k}+\sigma \sum_{p=1}^{N_1}I_{\{\alpha_{p}=1,|\alpha|=1\}}m_{p}^{n},\\
\delta_{t}E_{2,\alpha}^{n,i,j,k}&=\delta_{z}H_{1,\alpha}^{n,i,j,k}-\delta_{x}H_{3,\alpha}^{n,i,j,k}+\sigma \sum_{p=1}^{N_1}I_{\{\alpha_{p}=1,|\alpha|=1\}}m_{p}^{n},\\
\delta_{t}E_{3,\alpha}^{n,i,j,k}&=\delta_{x}H_{2,\alpha}^{n,i,j,k}-\delta_{y}H_{1,\alpha}^{n,i,j,k}+\sigma \sum_{p=1}^{N_1}I_{\{\alpha_{p}=1,|\alpha|=1\}}m_{p}^{n},\\
\delta_{t}H_{1,\alpha}^{n,i,j,k}&=\delta_{z}E_{2,\alpha}^{n,i,j,k}-\delta_{y}E_{3,\alpha}^{n,i,j,k}+\sigma \sum_{p=1}^{N_1}I_{\{\alpha_{p}=1,|\alpha|=1\}}m_{p}^{n},\\
\delta_{t}H_{2,\alpha}^{n,i,j,k}&=\delta_{x}E_{3,\alpha}^{n,i,j,k}-\delta_{z}E_{1,\alpha}^{n,i,j,k}+\sigma \sum_{p=1}^{N_1}I_{\{\alpha_{p}=1,|\alpha|=1\}}m_{p}^{n},\\
\delta_{t}H_{3,\alpha}^{n,i,j,k}&=\delta_{y}E_{1,\alpha}^{n,i,j,k}-\delta_{x}E_{2,\alpha}^{n,i,j,k}+\sigma \sum_{p=1}^{N_1}I_{\{\alpha_{p}=1,|\alpha|=1\}}m_{p}^{n}.\\
\end{split}
\end{equation*}
Taking the $E_1$ for the example, the third and fourth order statistical moments of $E_1$ are computed as follows
\begin{equation*}
\begin{split}
\mathcal{E}\left(E_1^{n,i,j,k}\right)^3&=\sum_{\alpha\in \mathcal{J}_{N_1,I_1}}\big[ \sum_{\rho\in \mathcal{J}_{N_1,I_1}}
\sum_{0\le \beta\le \alpha}G(\alpha,\beta,\rho)
 E^{n,i,j,k}_{1,\alpha-\beta+\rho}E^{n,i,j,k}_{1,\beta+\rho}\big]E^{n,i,j,k}_{1,\alpha},\\
\mathcal{E}\left(E_1^{n,i,j,k}\right)^4&=\sum_{\alpha\in \mathcal{J}_{N_1,I_1}}\big[\sum_{\rho\in \mathcal{J}_{N_1,I_1}}
\sum_{0\le \beta\le \alpha}G(\alpha,\beta,\rho)
 E^{n,i,j,k}_{1,\alpha-\beta+\rho}E^{n,i,j,k}_{1,\beta+\rho}\big]^2.
\end{split}
\end{equation*} 
The discrete averaged energy is defined by 
\begin{equation*}
\mathcal{E}\left[ \Phi(t_n)\right]=\sum_{\alpha\in \mathcal{J}_{N_1,I_1}}\sum_{i=0}^{I}\sum_{j=0}^{J}\sum_{k=0}^{K}\left(\left|{\bf E}_{\alpha}^{n,i,j,k}\right|^2+\left|{\bf H}_{\alpha}^{n,i,j,k}\right|^2  \right)\Delta x\Delta y\Delta z,~~n=0,1,\cdots,N,
\end{equation*}
where ${\bf E}_{\alpha}^{n,i,j,k}=\left( E_{1,\alpha}^{n,i,j,k},E_{2,\alpha}^{n,i,j,k},E_{3,\alpha}^{n,i,j,k}\right)^{\top}$ and ${\bf H}_{\alpha}^{n,i,j,k}=\left( H_{1,\alpha}^{n,i,j,k},H_{2,\alpha}^{n,i,j,k},H_{3,\alpha}^{n,i,j,k}\right)^{\top}$.
 
Considering the significantly increased computational demands in three dimensions, we mainly focus on the third and fourth order statistical moments of $E_1(1, y, z)$, $E_1(x, 1, z)$ and $E_1( x, y, 1)$ at $T=1$ and $\sigma=1$. To reduce computational cost, we take the spatial mesh grid size $\Delta x = \Delta y = \Delta z = 1/50$, and the temporal step size is $\Delta t=1/1000$. The WCE truncated set is $\mathcal{J}_{12,2}$.  MC method is performed with $1000$ realizations. Figure~\ref{pic3:4.1} and Figure~\ref{pic3:4.2} demonstrate the 3D plot of third and fourth order statistical moments of $E_1(1, y, z)$, $E_1(x, 1, z)$ and $E_1( x, y, 1)$. 
Table~\ref{table3} displays the relative errors between the two algorithms for various statistical moments. Consistent with the lower-dimensional findings, the results demonstrate the two algorithms have comparable accuracy. The computation times of the WCE algorithm and  MC method are $3645.2665$s, $4396.8753$s, respectively. It is worth that we pay attention to this phenomenon. Due to hardware constraints, the number of MC method cannot be substantially increased. While this results in relatively small discrepancies between the two algorithms for computation time, a limited sample size reduces the accuracy achievable by  MC method. The process shows a significant disadvantage of the  MC method for high-dimensional stochastic problems.
\begin{figure}[H]
\begin{center}
  \includegraphics[height=3.3cm,width=4.5cm]{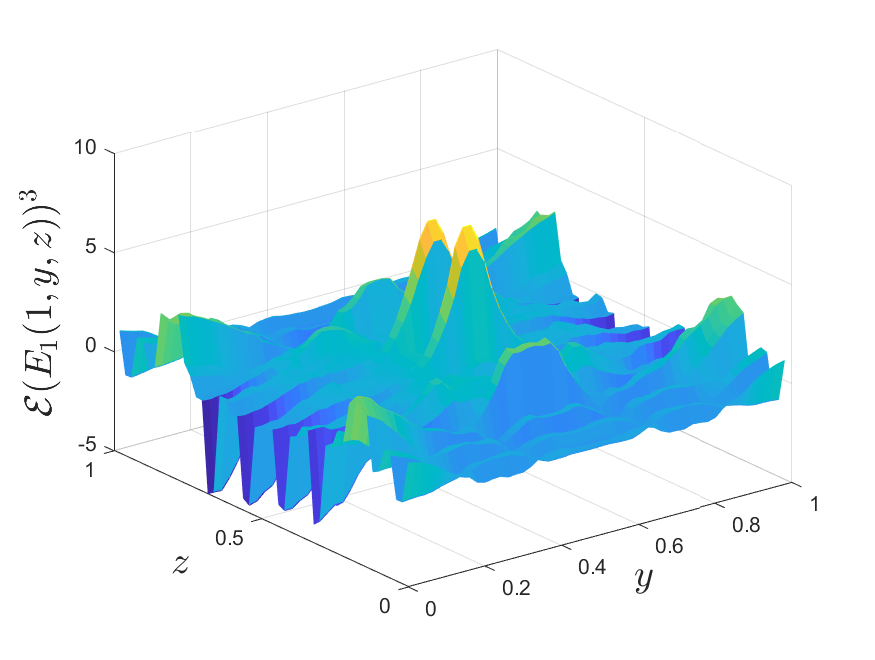}
    \includegraphics[height=3.3cm,width=4.5cm]{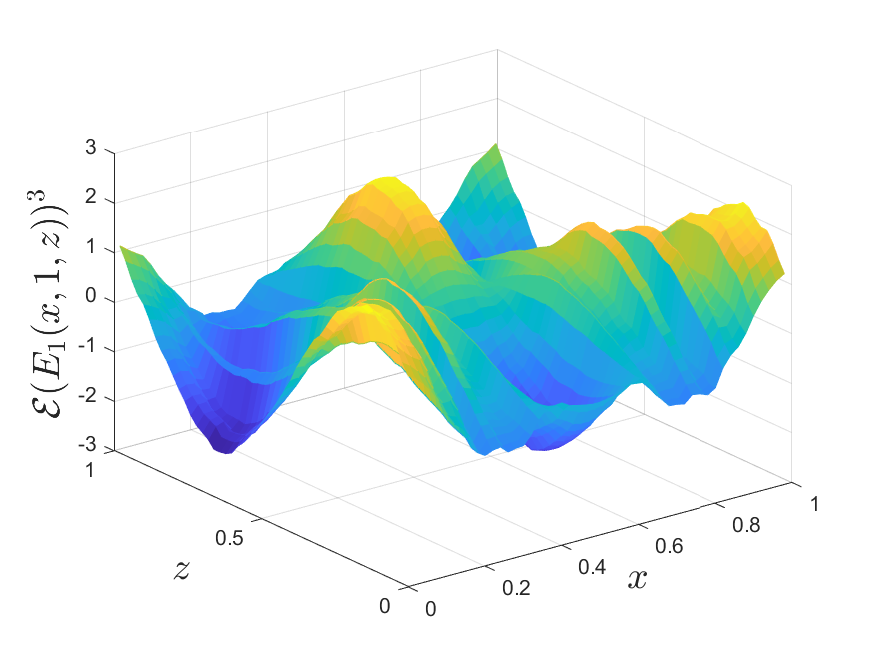}
  \includegraphics[height=3.3cm,width=4.5cm]{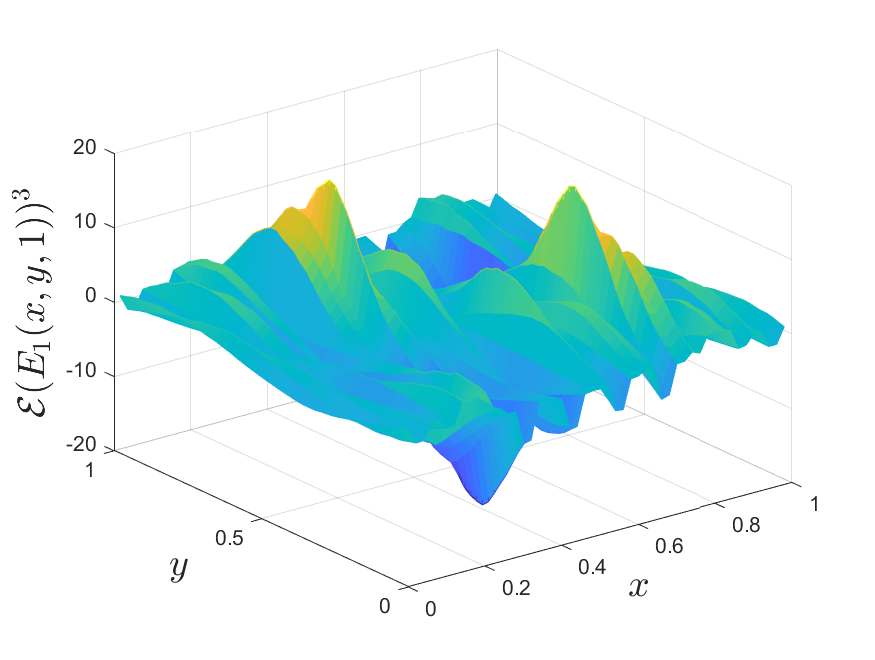}
  \includegraphics[height=3.3cm,width=4.5cm]{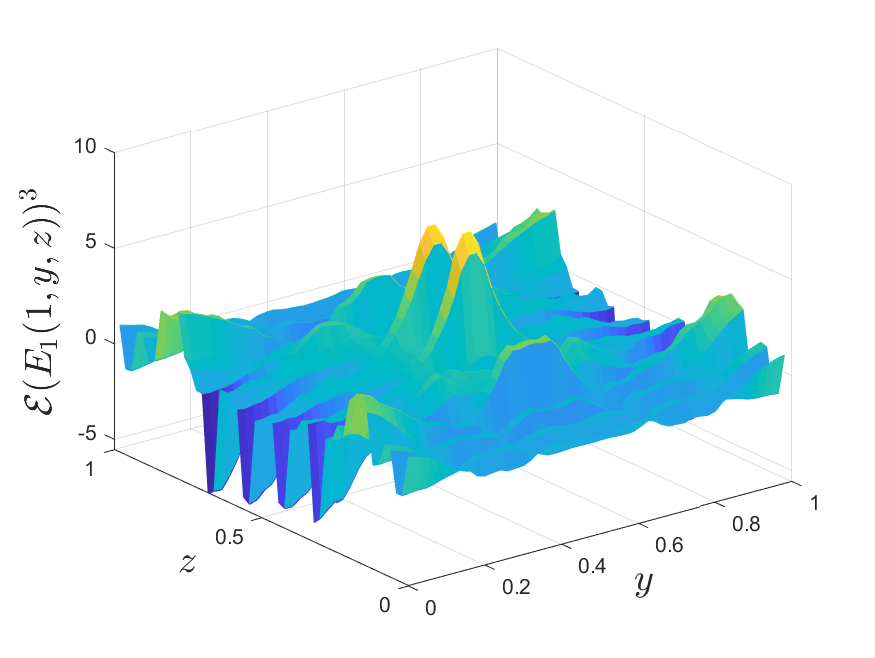}
    \includegraphics[height=3.3cm,width=4.5cm]{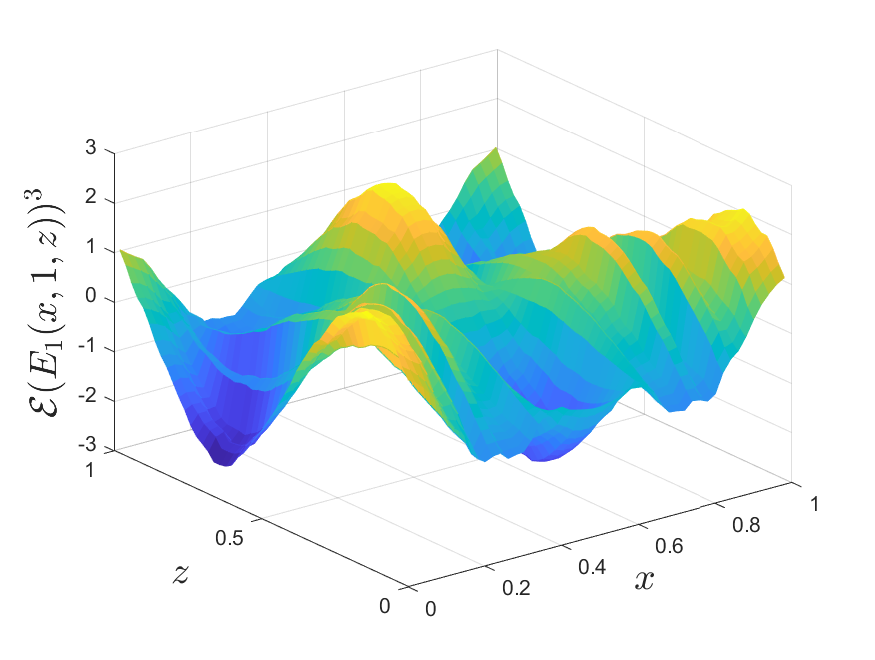}
  \includegraphics[height=3.3cm,width=4.5cm]{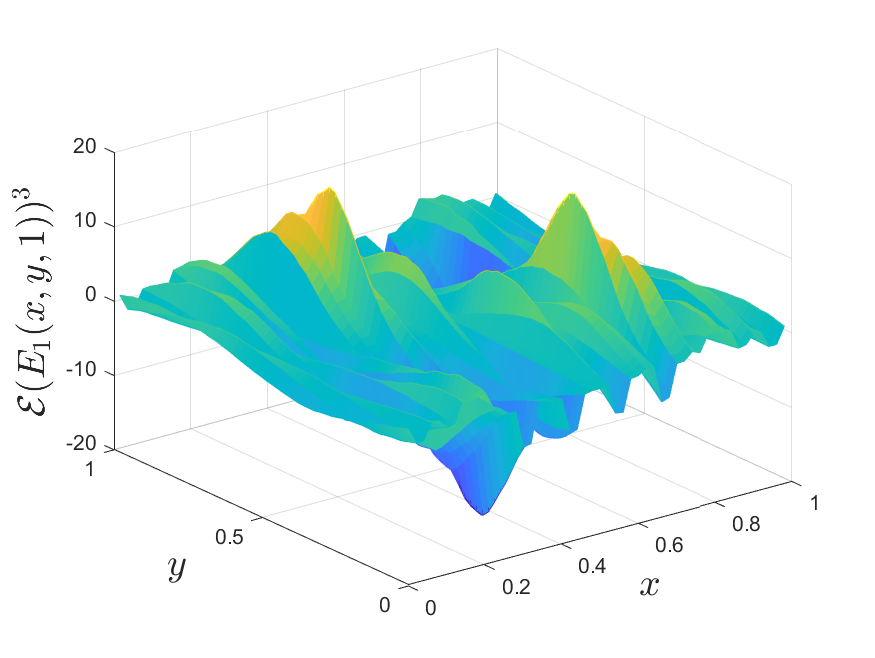}
  \caption{Third order statistical moments of $E_1$ when $x=1$, $y=1$ and $z=1$ 
  respectively by MC (top) and WCE (below).}\label{pic3:4.1}
  \end{center}
\end{figure}
\begin{figure}[H]
\begin{center}
  \includegraphics[height=3.3cm,width=4.5cm]{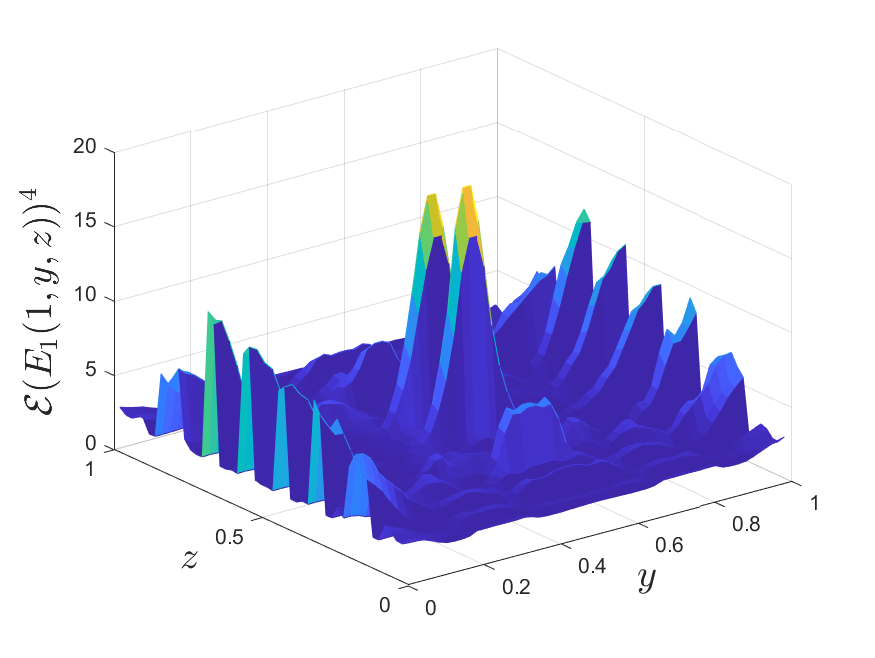}
    \includegraphics[height=3.3cm,width=4.5cm]{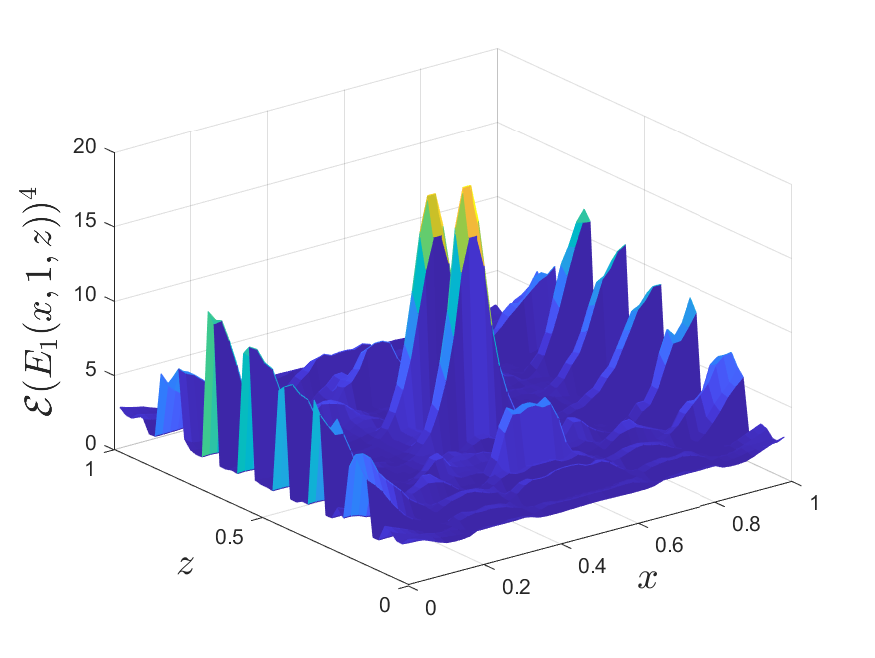}
  \includegraphics[height=3.3cm,width=4.5cm]{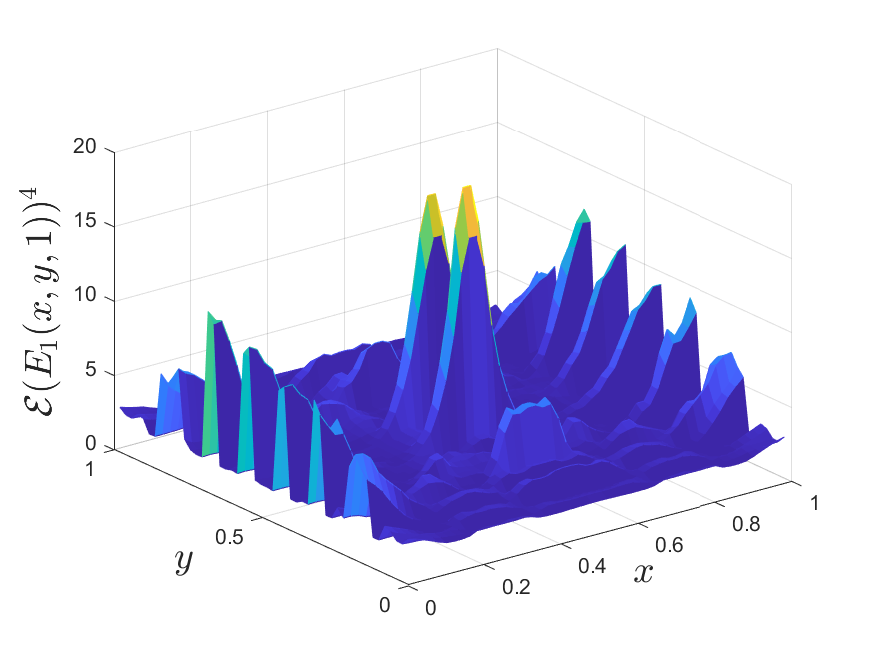}
  \includegraphics[height=3.3cm,width=4.5cm]{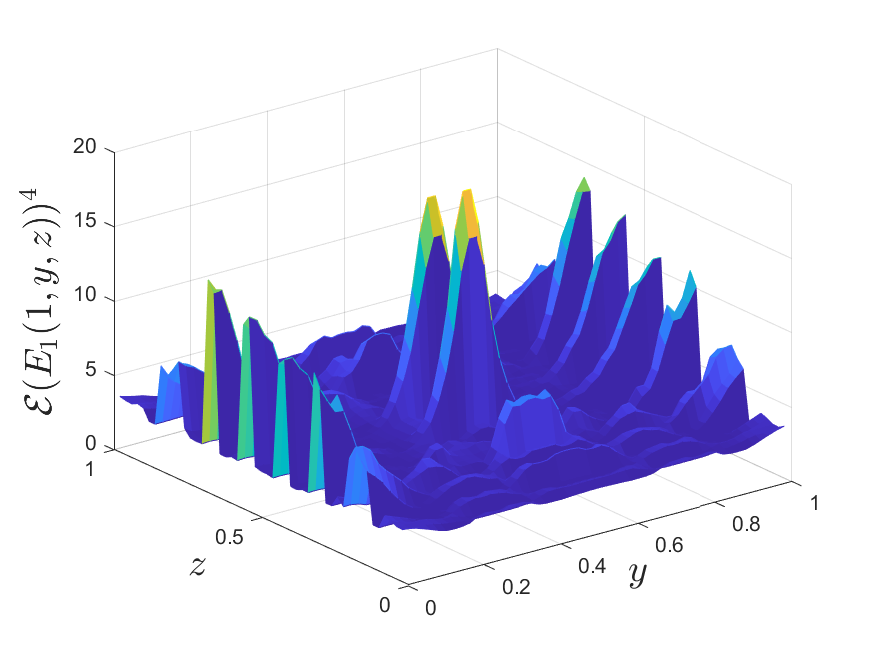}
    \includegraphics[height=3.3cm,width=4.5cm]{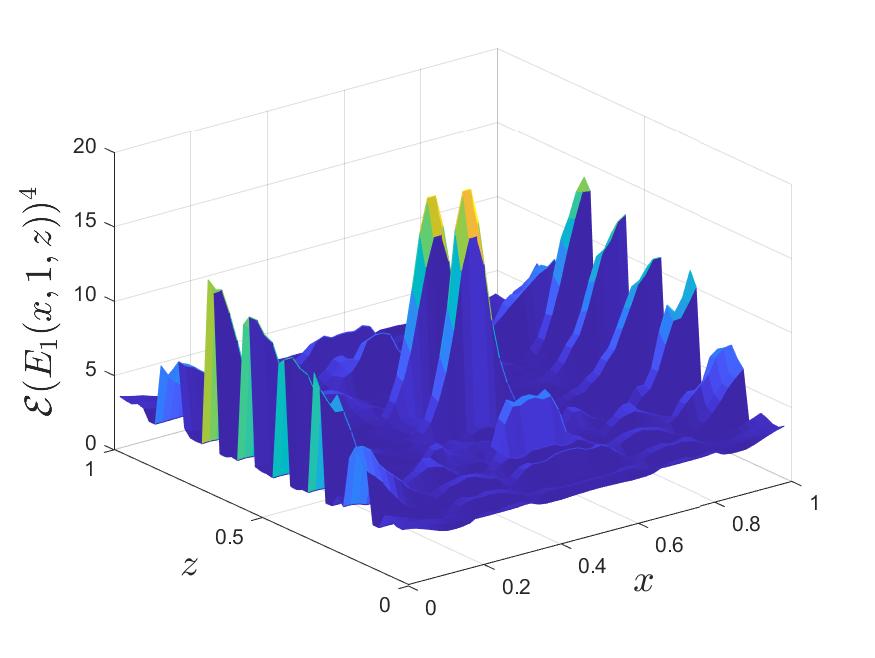}
  \includegraphics[height=3.3cm,width=4.5cm]{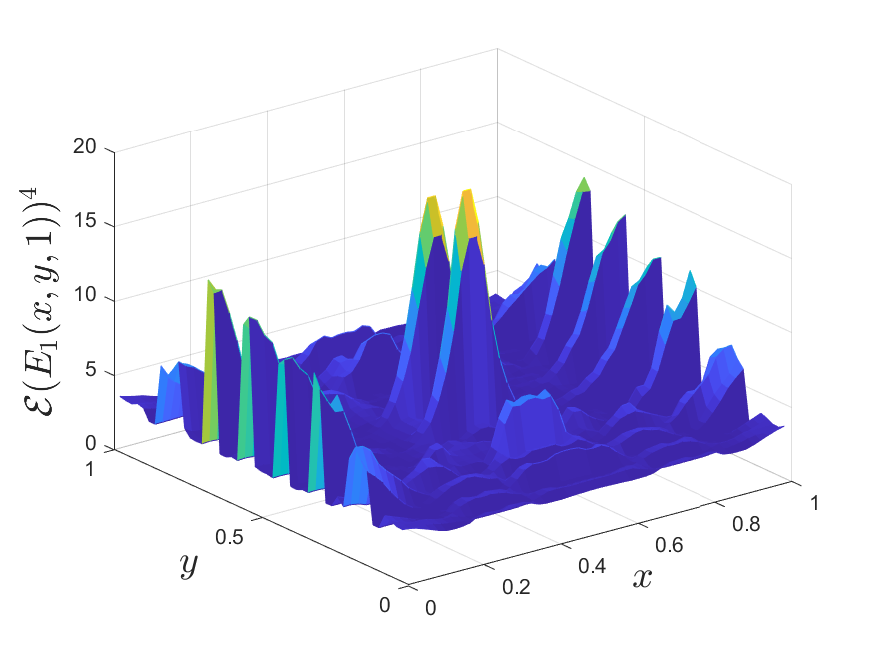}
  \caption{Fourth order statistical moments of $E_1$ when $x=1$, $y=1$ and $z=1$ 
  respectively by MC (top) and WCE (below).}\label{pic3:4.2}
  \end{center}
\end{figure} 
 \begin{table}[H]
\centering
\fontsize{10.5pt}{25pt}
 \begin{tabular}{cccccc} 
\toprule
 $u$ & $err\left[\mathcal{E}(u)\right]$& $ err\left[\mathcal{E}(u)^2\right]$ & $err\left[\mathcal{E}(u)^3\right]$ & $err\left[\mathcal{E}(u)^4\right]$\\
\midrule
$u=E_1$ & 0.0591 & 0.0314 & 0.0722  &0.0826\\
 \bottomrule
\end{tabular}
\caption{Relative errors of the WCE and MC}
\label{table3}
\end{table}

Finally, we compute the average energy for different $\sigma$. In order to accommodate the computational intensity, we take the spatial mesh grid size $\Delta x=\Delta y=\Delta z=1/40$ and the temporal step size is $\Delta t=1/1000$. Figure~\ref{pic3:4.3} presents the temporal evolution of the average energy. We can observe the results of WCE algorithm still are better than those of  MC method. Consistent with prior observations in 1-D and 2-D, the WCE algorithm demonstrates superior agreement with the exact solution compared to the  MC method, which exhibit minor deviations attributable to sampling variability.
\begin{figure}[H]
\begin{center}
  \includegraphics[height=4.4cm,width=8cm]{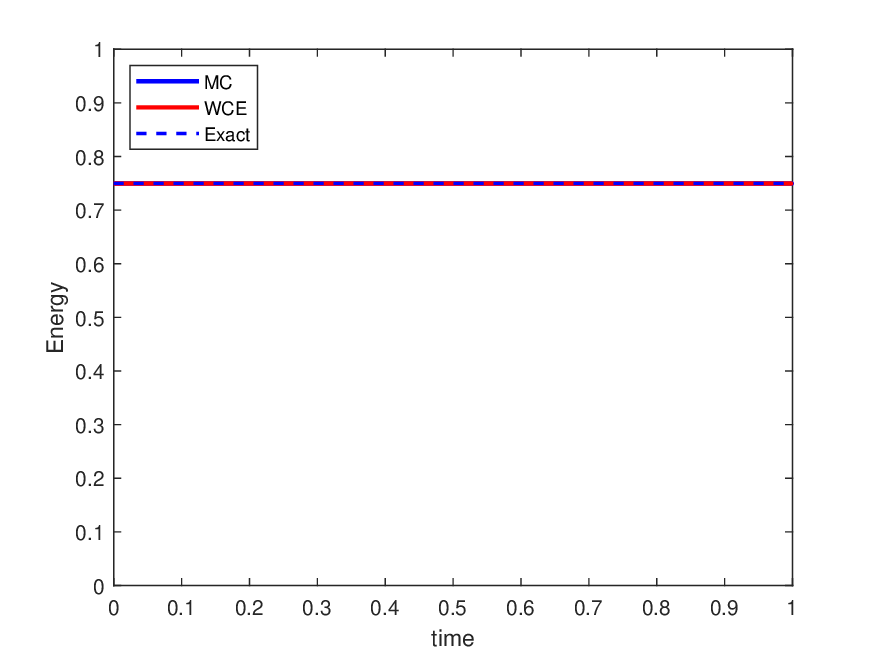}
  \includegraphics[height=4.4cm,width=8cm]{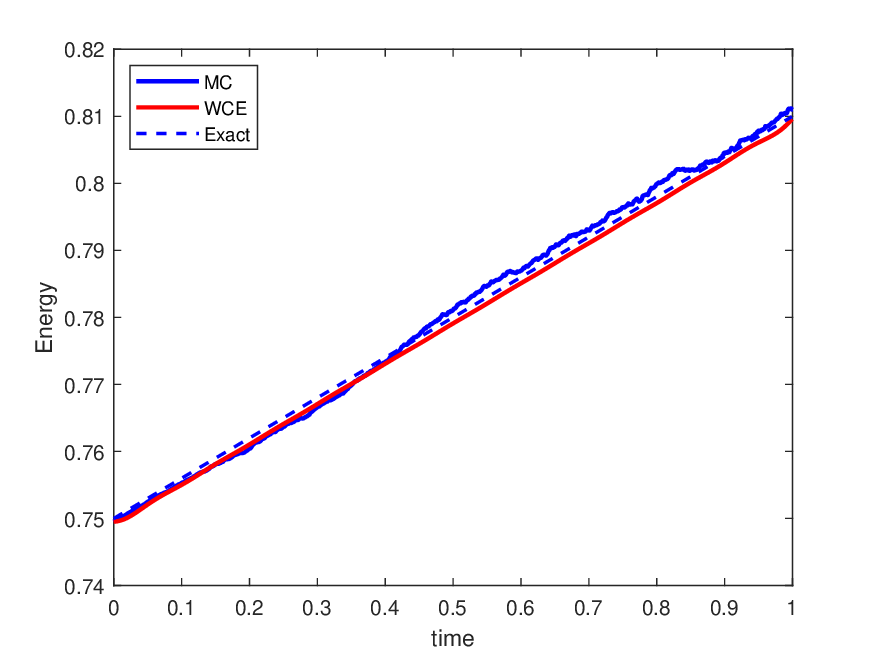}
  \includegraphics[height=4.4cm,width=8cm]{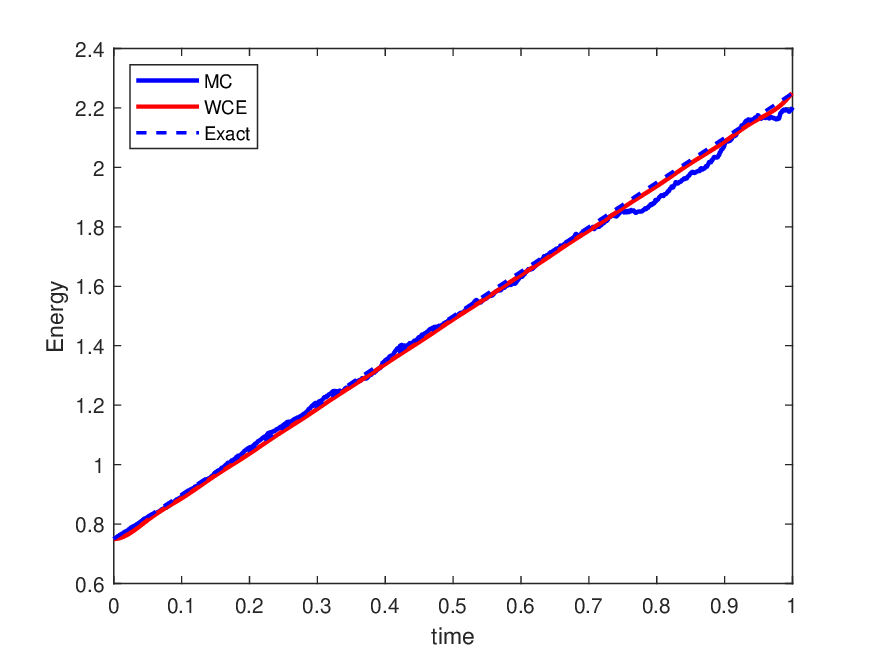}
  \includegraphics[height=4.4cm,width=8cm]{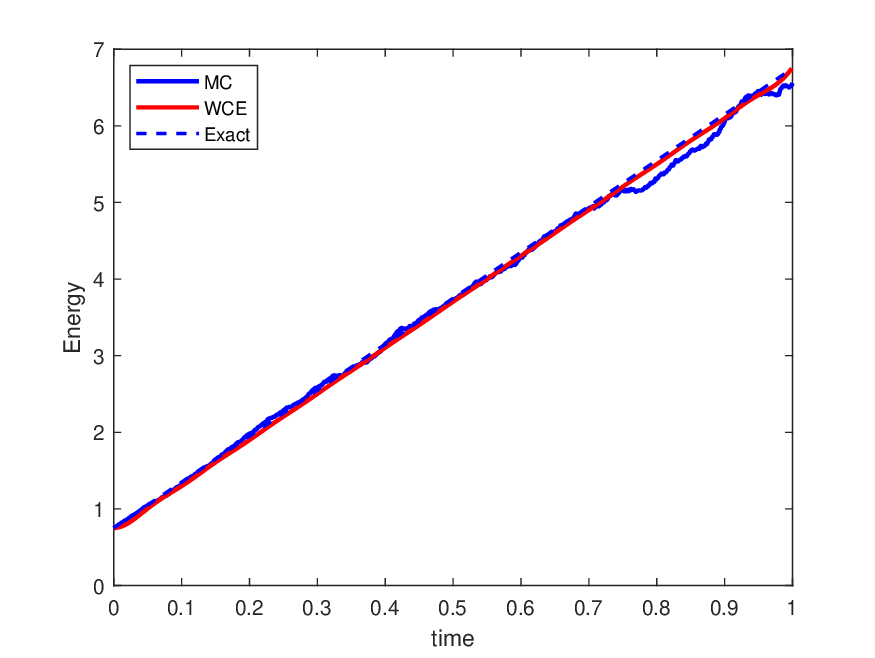}
  \caption{Averaged energy by WCE and MC for different sizes of noise $\sigma = 0$ (top left), $\sigma = 0.1$ (top right), $\sigma = 0.5$ (below left) and $\sigma = 1$ (below right).}\label{pic3:4.3}
  \end{center}
\end{figure}

\section{Concluding remarks}
In this paper, we propose a Wiener chaos expansion (WCE) algorithm for solving stochastic Maxwell equations with additive noise. By reformulating the stochastic Maxwell equations into a stochastic multi-symplectic structure, we derive a corresponding deterministic system for the WCE coefficients that inherits a similar multi-symplectic structure. The WCE framework also provides explicit formulations for computing the statistical moments and the averaged energy of the stochastic solution. Numerical experiments demonstrate that the WCE algorithm significantly outperforms the Monte Carlo (MC) method in both computational efficiency and accuracy when computing statistical moments. Furthermore, the results confirm that the averaged energy computed via the WCE algorithm adheres to the theoretical linear growth behavior.
\section*{Acknowledgments}
The authors would like to express their appreciation to the referees for their useful comments and the editors. Liying Zhang is supported by the National Natural Science Foundation of China (No. 11601514 and No. 11971458), the Fundamental Research Funds for the Central Universities (No. 2023ZKPYL02 and No. 2023JCCXLX01), the Yueqi Youth Scholar Research Funds for the China University of Mining and Technology-Beijing (No. 2020YQLX03) and the Fundamental Research Funds for the Central Universities (2025 Basic Sciences Initiative in Mathematics and Physics). Lihai Ji is supported by the National Natural Science Foundation of China (No. 12171047).

\end{document}